\numberwithin{equation}{section}
\theoremstyle{definition}
\newtheorem{thm}{Theorem}[section]
\newtheorem{lemma}[thm]{Lemma}
\newtheorem{cor}[thm]{Corollary}
\newtheorem{rem}[thm]{Remark}
\begin{document}

\begin{abstract}
We compute the spectra and the essential spectra of bounded linear fractional composition operators acting on the Hardy and weighted Bergman spaces of the upper half-plane. We are also able to extend the results to weighted Dirichlet spaces of the upper half-plane.
\end{abstract}

\thanks{The author is supported by the Magnus Ehrnrooth Foundation in Finland.}
\subjclass[2010]{47B33}
\keywords{Composition operator, spectrum, essential spectrum, Hardy space of the upper half-plane, weighted Bergman spaces of the upper half-plane}

\title[Spectra of composition operators on spaces of the upper half-plane]{Spectra of linear fractional composition operators on the Hardy and weighted Bergman spaces of the half-plane}

\author[R. Schroderus]{Riikka Schroderus}
\address{Department of Mathematics and Statistics\\ University of Helsinki, Box 68 \\ FI-00014 Helsinki, Finland}
\email{riikka.schroderus@helsinki.fi}

\maketitle

\section{Introduction}\label{intro}

The Hardy space $H^2(\Pi^+)$ and the weighted Bergman spaces $\mathcal{A}_{\alpha}^2 (\Pi^+ )$ of the upper half-plane $\Pi^+ $ are reasonably well understood Hilbert spaces of analytic functions (the spaces are defined in Section \ref{prelim}). \textit{Composition operators} $ C_{\tau} : f\mapsto f\circ \tau $, where $ \tau : \Pi^+ \longrightarrow \Pi^+$ is analytic, acting on these spaces are, however, much less studied when compared to their counterparts in the unit disc setting. Matache \cite{Ma1} found a condition for boundedness of $C_{\tau}$ on $H^2(\Pi^+) $ in terms of Carleson measures and showed later in \cite{Ma2} that there are no compact composition operators on $H^2(\Pi^+)$. In \cite{SS} Shapiro and Smith extended the non-compactness result to $\mathcal{A}_{\alpha}^2 (\Pi^+ )$. Boundedness of a composition operator $C_{\tau}$ on the Hardy or the weighted Bergman spaces of the half-plane has been proved to be equivalent with the angular derivative of the inducing map at infinity, denoted by $\tau' (\infty)$, being finite and positive (see \cite{Ma3, EW}). In addition, Elliott et al. \cite{EW, EJ} have shown that whenever $C_{\tau}$ is bounded, the operator norm, the essential operator norm and the spectral radius are all equal and determined by the quantity $\tau'(\infty )$. The above properties show that the spaces $H^2(\Pi^+)$ and $\mathcal{A}_{\alpha}^2 (\Pi^+ )$ differ significantly from their unit disc analogues $H^2 (\mathbb{D})$ and $\mathcal{A}_{\alpha}^2 (\mathbb{D})$ regarding the composition operators acting on them. We refer e.g. to \cite{Sh, CM} for expositions of the rich theory of composition operators on spaces defined on the unit disc.

It is natural to ask what the spectral properties of composition operators acting on the half-plane are. In fact, in the unit disc setting the spectral picture of composition operators has been completely determined when the inducing maps are \textit{linear fractional transformations} and the operators act on weighted Dirichlet spaces, including $H^2 (\mathbb{D})$ and $\mathcal{A}_{\alpha}^2 (\mathbb{D})$ for all $\alpha >-1$ (see, for instance, \cite{No, CM, Hu, Hi, Po, GS}). The spectra in the corresponding setting on the half-plane are largely unknown; in the (unweighted) Dirichlet space of $\Pi^+$ the spectra are known but besides that only the spectra (and the essential spectra) for invertible or self-adjoint parabolic and invertible hyperbolic composition operators acting on the Hardy space $H^2(\Pi^+)$ have been computed (see \cite{Ma4}). In contrast to the unit disc case, not all linear fractional transformations $\tau$ induce bounded composition operators on $H^2(\Pi^+)$ or $\mathcal{A}_{\alpha}^2 (\Pi^+ )$. Indeed, $C_{\tau}$ is bounded only when $\tau$ is a parabolic or a hyperbolic self-map of $ \Pi^+$ fixing infinity. In this paper we compute the spectra and the essential spectra of these composition operators.

In the parabolic case (see Theorem \ref{sppara} in Section \ref{secpara}) we obtain the following result: 

\vspace{5pt}

\textbf{Theorem A.} Let $\tau$ be a parabolic self-map of $\Pi^+$, that is, $\tau (w)=w+w_0$, where $ \textnormal{Im}\, w_0 \geq 0$ and $w_0 \neq 0$. Then the spectrum of $C_{\tau}$ acting on the Hardy or the weighted Bergman spaces of the upper half-plane equals 
$$ \overline{ \{ e^{iw_0t} : t\in [0,\infty ) \} } = \left\{ \begin{array}{ll}
\mathbb{T}, & \textnormal{ when } w_0\in \mathbb{R},\\
\{ e^{iw_0t} : t\in [0,\infty ) \} \cup \{ 0\}, & \textnormal{ when } w_0\in \Pi^+.
\end{array}
\right.
$$
Moreover, the essential spectrum coincides with the spectrum in both cases.

\vspace{5pt}

For a moment, write formally $H^2(\Pi^+) = \mathcal{A}_{-1}^2 (\Pi^+ ) $. We can summarize the results in the hyperbolic case (see Theorems \ref{sphypauto}, \ref{hyptauyks}, \ref{hyptaukaks} and Corollary \ref{sphypautoinverse} in Section \ref{hypsec}) as follows:

\vspace{5pt}

\textbf{Theorem B.} Let $\tau$ be a hyperbolic self-map of $\Pi^+$, that is, $\tau (w)=\mu w+w_0$, where $\mu\in (0,1)\cup (1,\infty)$ and $\textnormal{Im}\, w_0 \geq 0$. Then, for all $\alpha \geq -1$, the spectrum of $C_{\tau}$ acting on $\mathcal{A}_{\alpha}^2 (\Pi^+ )$ is

\vspace{5pt}

\begin{itemize}
\item[i)] $  \big\{ \lambda \in \mathbb{C} : \vert \lambda \vert = \mu^{ -(\alpha  +2 )/2}\big\} $, when $w_0 \in \mathbb{R}$,

\vspace{5pt}

\item[ii)] $ \big\{ \lambda \in \mathbb{C} : \vert \lambda \vert\leq \mu^{ -(\alpha  +2 )/2}\big\}$, when $w_0\in \Pi^+$.
\end{itemize}
Moreover, the essential spectrum coincides with the spectrum in both cases.

\vspace{5pt}

From these results we are also able to deduce the spectra of the parabolic and the hyperbolic composition operators on weighted Dirichlet spaces $\mathcal{D}_{\alpha}^2 (\Pi^+ ) $ for $\alpha >-1$ (see Theorems \ref{dirpara} and \ref{dirhyper} in Section \ref{secdir}).

There are similarities as well as differences when we consider the composition operators $C_{\tau}$ on the half-plane and $C_{\varphi}$ on the unit disc, where the inducing maps satisfy $\tau =h \circ \varphi \circ h^{-1}$ for some conformal map $h: \mathbb{D} \longrightarrow \Pi^+$. It is worth noting that the composition map $ C_h $ which gives a unitary equivalence in the (unweighted) Dirichlet space setting between $ C_{\tau}: \mathcal{D}_0^2 (\Pi^+)\longrightarrow \mathcal{D}_0^2 (\Pi^+)$ and $C_{\varphi}: \mathcal{D}^2 (\mathbb{D})/\mathbb{C} \longrightarrow \mathcal{D}^2 (\mathbb{D})/\mathbb{C} $ does not give even similarity in the Hardy or the weighted Bergman spaces since $C_h$ is not bounded below. Therefore, in general, we cannot deduce the spectra of $C_{\tau}$ on the half-plane from the spectra of $C_{\varphi}$ on the unit disc. Nevertheless, in the parabolic case the spectra are the same. The hyperbolic composition operators, on the other hand, seem to have a life of their own when comparing the spectral results between $\mathbb{D}$ and $\Pi^+$.

\section{Preliminaries and notation}\label{prelim}

Throughout the paper we will use the following notations: $\overline{\mathbb{C}}:=\mathbb{C}\cup \{\infty\}$ denotes the extended complex plane, $\Pi^+ := \{ w\in \mathbb{C}: \textnormal{Im}\, w >0\}$ the upper half-plane, $\mathbb{D}:=\{z\in \mathbb{C} : \vert z\vert <1\}$ the unit disc and $\mathbb{T}:= \{z\in \mathbb{C} : \vert z\vert =1\} $ the unit circle.

\subsection*{The Hardy and weighted Bergman spaces of the upper half-plane}
The \textit{Hardy space} $H^2 (\Pi^{+})$ of the upper half-plane consists of the analytic functions $F:\Pi^{+} \longrightarrow \mathbb{C} $ such that
$$ \Vert F\Vert_{H^2 (\Pi^{+})} = \sup_{y>0} \Big(\int_{-\infty}^{\infty} \vert F(x+iy)\vert^2 \, dx \Big)^{1/2} < \infty .$$
The space $H^2 (\Pi^{+})$ is isometrically embedded into $ L^2 (\mathbb{R})$ via the mapping $F \longmapsto F^{*}$, where $F^{*}(x)= \lim_{y \longrightarrow 0^{+}} F(x+iy)$ (for more information on Hardy spaces of the half-plane, see \cite[Chapter 11]{Du}, for instance).

For $\alpha > -1$, the \textit{weighted Bergman space} $\mathcal{A}_{\alpha}^2 (\Pi^{+})$ of the upper half-plane consists of the functions $F$ analytic on $\Pi^{+}$ satisfying
$$ \Vert F\Vert_{\mathcal{A}_{\alpha}^2 (\Pi^{+})}  = \Big( \int_{\Pi^{+}} \vert F(x+iy)\vert^2 \, y^{\alpha} \, dx \, dy \Big)^{1/2} < \infty .$$

The Hardy space $H^2 (\Pi^{+})$ can often be formally interpreted as the ``limit case'' of the weighted Bergman spaces as $\alpha \longrightarrow -1$, that is $ H^2 (\Pi^{+}) = \mathcal{A}_{-1}^2 (\Pi^{+})$. In the sequel, we will use this formal convention. The spaces $\mathcal{A}_{\alpha}^2 (\Pi^{+})$, for $\alpha \geq -1$, are Hilbert spaces and we use the notation $\langle \cdot , \cdot \rangle_{\mathcal{A}_{\alpha}^2 (\Pi^{+}) }$ for the inner product.

\vspace{5pt}

Often we will find it useful to change the perspective from $\mathcal{A}_{\alpha}^2 ( \Pi^+)$ to the corresponding space in the unit disc, namely to the classical Hardy space
$$H^2 (\mathbb{D})=  \big\{f: \mathbb{D} \longrightarrow \mathbb{C} \textnormal{ analytic}, \Vert f \Vert^2= \sup_{0 <r <1 }\int_{0}^{2\pi} \vert f(re^{i\theta})\vert^2 \, \frac{d\theta}{2\pi} <\infty \big\},$$ 
or the weighted Bergman spaces defined for all $\alpha > -1$ by (for $z=u+iv$)
$$\mathcal{A}_{\alpha}^2 (\mathbb{D})= \{f: \mathbb{D} \longrightarrow \mathbb{C} \textnormal{ analytic}, \Vert f \Vert_{\alpha}^2= \int_{\mathbb{D}} \vert f(z)\vert^2 (1-\vert z\vert^2)^{\alpha}\, du \, dv <\infty \big\} .$$
We will write also here $H^2 (\mathbb{D}) =  \mathcal{A}_{-1}^2 (\mathbb{D})$ for convenience. Recall that the function $h$, where 
\begin{equation}\label{hoo}
h(z)=i \frac{1+z}{1-z}, 
\end{equation} 
is a conformal map from $\mathbb{D}$ onto $\Pi^+$ with the inverse $h^{-1}(w)=\frac{w-i}{w+i}$. For any $\alpha \geq -1$ there is an isometric isomorphism (see e.g. \cite[pp. 128-131]{Ho} for the Hardy space and \cite{DGM} for the weighted Bergman spaces)
\begin{equation}\label{jii}
J: \mathcal{A}_{\alpha}^2 (\mathbb{D}) \longrightarrow \mathcal{A}_{\alpha}^2 ( \Pi^+),  \quad (Jf)(w)=  f \Big( \frac{w-i}{w+i}\Big)\frac{c_{\alpha}}{(w+i)^{\alpha +2}},
\end{equation} 
where
$$c_{\alpha}=\left\{ \begin{array}{ll}
1/\sqrt{\pi}, & \textnormal{ when } \alpha =-1, \\
2^{\alpha +1}, & \textnormal{ when } \alpha > -1 .
\end{array} \right.
$$
The inverse of $J$ is given by
$$ J^{-1}: \mathcal{A}_{\alpha}^2 ( \Pi^+) \longrightarrow \mathcal{A}_{\alpha}^2 (\mathbb{D}) ,  \quad (J^{-1}F)(z)= F \Big( i\frac{1+z}{1-z}\Big)\frac{d_{\alpha}}{(1-z)^{\alpha +2}},
$$
where
$$d_{\alpha}=\left\{ \begin{array}{ll}
2\sqrt{\pi}i, & \textnormal{ when } \alpha =-1, \\
2i, & \textnormal{ when } \alpha > -1 .
\end{array} \right.
$$

\vspace{5pt}
 
Recall that the \textit{reproducing kernels} of $\mathcal{A}_{\alpha}^2 ( \Pi^+)$, for $\alpha \geq -1$, are functions $K_{w_0}^{\alpha} \in \mathcal{A}_{\alpha}^2 ( \Pi^+)$ satisfying 
$$ \langle F, K_{w_0}^{\alpha}\rangle_{\mathcal{A}_{\alpha}^2 ( \Pi^+)} = F(w_0) \textnormal{ for any } F \in \mathcal{A}_{\alpha}^2 ( \Pi^+) \textnormal{ and } w_0 \in \Pi^+ .$$ 
We will next compute the explicit form of $ K_{w_0}^{\alpha}$ since we could not find a suitable reference for the general case.

\begin{lemma} 
The reproducing kernels $ K_{w_0}^{\alpha} \in  \mathcal{A}_{\alpha}^2 ( \Pi^+) $ for $\alpha \geq -1$ are of the form
\begin{equation}\label{reprkernela} K_{w_0}^{\alpha} (w)= \frac{k_{\alpha}}{ (w-\overline{w_0})^{\alpha +2}},
\end{equation}
where
$$k_{\alpha}=\left\{ \begin{array}{ll}
i/2\pi, & \textnormal{ when } \alpha =-1, \\
i(\alpha +1) 2^{\alpha }, & \textnormal{ when } \alpha > -1.
\end{array} \right.
$$
\end{lemma}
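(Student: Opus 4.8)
The plan is to transport the known reproducing kernels of the disc spaces $\mathcal{A}_\alpha^2(\mathbb{D})$ over to $\mathcal{A}_\alpha^2(\Pi^+)$ through the isometric isomorphism $J$ of \eqref{jii}. On the disc side the kernels are classical: for $\alpha=-1$ one has $k_{z_0}(\zeta)=(1-\overline{z_0}\,\zeta)^{-1}$, while for $\alpha>-1$ the orthogonality of the monomials together with the (negative) binomial series gives $k_{z_0}(\zeta)=\tfrac{\alpha+1}{\pi}(1-\overline{z_0}\,\zeta)^{-(\alpha+2)}$. I would record these (with a one-line derivation in the Bergman case) as the starting point.

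The key observation is that, since $J$ is unitary, point evaluation on $\mathcal{A}_\alpha^2(\Pi^+)$ factors through $J^{-1}$ and the disc reproducing kernel. Writing $f=J^{-1}F$ and $z_0=h^{-1}(w_0)=\frac{w_0-i}{w_0+i}$, the definition \eqref{jii} gives $F(w_0)=(Jf)(w_0)=f(z_0)\,\frac{c_\alpha}{(w_0+i)^{\alpha+2}}$, and the disc reproducing property combined with $\langle f,g\rangle_{\mathcal{A}_\alpha^2(\mathbb{D})}=\langle Jf,Jg\rangle_{\mathcal{A}_\alpha^2(\Pi^+)}$ yields
\[ F(w_0)=\Big\langle F,\ \overline{\Big(\tfrac{c_\alpha}{(w_0+i)^{\alpha+2}}\Big)}\,Jk_{z_0}\Big\rangle_{\mathcal{A}_\alpha^2(\Pi^+)}. \]
Hence $K_{w_0}^\alpha=\overline{\big(c_\alpha/(w_0+i)^{\alpha+2}\big)}\,Jk_{z_0}$, and it only remains to compute $Jk_{z_0}$ explicitly.

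For the explicit form I would substitute $\zeta=\frac{w-i}{w+i}$ into $k_{z_0}$ and simplify the Möbius expression using $\overline{z_0}=\frac{\overline{w_0}+i}{\overline{w_0}-i}$; the decisive algebraic step is the identity $1-\overline{z_0}\,\frac{w-i}{w+i}=\frac{-2i\,(w-\overline{w_0})}{(\overline{w_0}-i)(w+i)}$, which I would verify by clearing denominators. Raising this to the power $\alpha+2$ and multiplying by the weight $c_\alpha/(w+i)^{\alpha+2}$ from \eqref{jii}, the factors $(w+i)^{\alpha+2}$ cancel, and after the conjugate factor $\overline{(w_0+i)^{-(\alpha+2)}}=(\overline{w_0}-i)^{-(\alpha+2)}$ cancels the remaining $(\overline{w_0}-i)^{\alpha+2}$, one is left with precisely an expression of the form $k_\alpha\,(w-\overline{w_0})^{-(\alpha+2)}$, as in \eqref{reprkernela}.

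The only real work is the bookkeeping of the constant $k_\alpha$: one collects $c_\alpha$, the disc kernel constant, and the factor $(-2i)^{-(\alpha+2)}$, treating $\alpha=-1$ and $\alpha>-1$ separately since both $c_\alpha$ and the disc kernel differ in the two cases. The one point demanding care is the branch of the power $(\cdot)^{\alpha+2}$, which fixes the precise phase of $k_\alpha$; the correct branch is the one making $K_{w_0}^\alpha(w_0)=\Vert K_{w_0}^\alpha\Vert^2>0$. As a consistency check I would confirm the resulting constant by computing $\Vert K_{w_0}^\alpha\Vert_{\mathcal{A}_\alpha^2(\Pi^+)}^2$ directly through the Beta-integral $\int_{\Pi^+}y^\alpha\,|w-\overline{w_0}|^{-2(\alpha+2)}\,dx\,dy$ and matching it with $K_{w_0}^\alpha(w_0)$. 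I expect this branch and normalization tracking to be the main (though routine) obstacle, the conceptual part being immediate from the unitarity of $J$.
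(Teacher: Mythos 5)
Your proposal is correct and follows essentially the paper's own route: the paper likewise transports the disc kernels $g_{z_0}^{\alpha}$ through the isometry $J$ of \eqref{jii}, writing $F(w_0)$ in terms of $(J^{-1}F)(z_0)$ and simplifying the same M\"obius expression that your displayed identity $1-\overline{z_0}\,\frac{w-i}{w+i}=\frac{-2i(w-\overline{w_0})}{(\overline{w_0}-i)(w+i)}$ handles. The genuine added value of your version is precisely the constant- and branch-tracking that you flag as ``routine'', because that is where the paper slips. With the paper's unnormalized area norm on $\mathcal{A}_{\alpha}^2(\mathbb{D})$ the disc Bergman kernel constant is $(\alpha+1)/\pi$, as you have it, not the $\nu_{\alpha}=\alpha+1$ the paper quotes; moreover the paper's pair $c_{\alpha}=2^{\alpha+1}$, $d_{\alpha}=2i$ is inconsistent for $\alpha>-1$, since $J^{-1}J=\mathrm{Id}$ forces $c_{\alpha}d_{\alpha}=(2i)^{\alpha+2}$ (for $\alpha=0$ the paper's constants give $J^{-1}Jf=-if$) --- an issue you sidestep entirely by never invoking $d_{\alpha}$, using only the unitarity of $J$ and the formula for $J$ itself. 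Carrying out your plan with the branch fixed by your positivity criterion $K_{w_0}^{\alpha}(w_0)=\Vert K_{w_0}^{\alpha}\Vert^2>0$ yields $k_{\alpha}=\pi^{-1}(\alpha+1)2^{\alpha}e^{i\pi(\alpha+2)/2}$ for $\alpha>-1$ (and $i/2\pi$ for $\alpha=-1$, agreeing with the paper); at $\alpha=0$ this is the classical Bergman kernel $-1/\big(\pi(w-\overline{w_0})^2\big)$ of $\Pi^+$, whereas the paper's stated $k_0=i$ is not even real on the diagonal. So do not be alarmed when your bookkeeping disagrees with \eqref{reprkernela} as printed for $\alpha>-1$: yours is the constant consistent with the paper's own normalizations, and the discrepancy is harmless downstream, since the only later use of the lemma (in Theorem \ref{hyptaukaks}) relies on the form $k_{\alpha}(w-\overline{w_0})^{-(\alpha+2)}$ with $k_{\alpha}$ depending only on $\alpha$.
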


\begin{proof}
Recall first that for $\alpha \geq -1$, the reproducing kernels $g_{z_0}^{\alpha}$ of $\mathcal{A}_{\alpha}^2 (\mathbb{D})$ at a point $z_0\in \mathbb{D}$ are of the form (see \cite[Chapter 2]{CM}, for instance)
$$g_{z_0}^{\alpha} (z)= \frac{ \nu_{\alpha}}{(1-\overline{z_0}z)^{\alpha +2}}, \textnormal{ where } \nu_{-1}=1  \textnormal{ and }\nu_{\alpha }= (\alpha +1)   \textnormal{ otherwise} .$$
But for each $F\in \mathcal{A}_{\alpha}^2 ( \Pi^+)$ and $ w_0 = i \frac{1+z_0}{1-z_0}\in \Pi^+$ it holds that
$$F(w_0)= \frac{(1-z_0)^{\alpha +2}}{d_{\alpha}} (J^{-1}F)(z_0) =  \frac{(1-z_0)^{\alpha +2}}{d_{\alpha}} \langle J^{-1}F, g_{z_0}^{\alpha}\rangle_{\mathcal{A}_{\alpha}^2 (\mathbb{D})} =\frac{(1-z_0)^{\alpha +2}}{d_{\alpha}}  \langle F, Jg_{z_0}^{\alpha}\rangle_{\mathcal{A}_{\alpha}^2 ( \Pi^+)} ,$$ 
where $J$ is the isometric isomorphism in \eqref{jii}. On the other hand, $F(w_0)=\langle F, K_{w_0}^{\alpha} \rangle_{ \mathcal{A}_{\alpha}^2 ( \Pi^+)} $, so that
\begin{equation*}
\begin{split}
K_{w_0}^{\alpha} (w)& =\frac{(1-\overline{z_0})^{\alpha +2}}{\overline{d_{\alpha}}}  \big(Jg_{z_0}^{\alpha}\big)(w) = \frac{(1-\overline{z_0})^{\alpha +2}}{\overline{d_{\alpha}}}\frac{\nu_{\alpha}}{\Big(1-\overline{z_0}\big( \frac{w-i}{w+i}\big)\Big)^{\alpha +2}} \frac{c_{\alpha}}{(w+i)^{\alpha +2}} \\
&= \frac{\nu_{\alpha} c_{\alpha}}{\overline{d_{\alpha}}} \frac{1}{ \Big(w- \overline{ i \frac{1+z_0}{1-z_0}}\Big)^{\alpha +2}} = \frac{k_{\alpha}}{ (w-\overline{w_0})^{\alpha +2}}.
\end{split}
\end{equation*} 

\end{proof}
\vspace{5pt}

For any $\beta \geq 0$, let us denote by $ L_{\beta}^2$ the space consisting of the measurable functions $f:\mathbb{R}_+ \longrightarrow \mathbb{C} $ with finite norm
$$  \Vert f\Vert_{L_{\beta}^2} = \left\{ \begin{array}{ll}
   \Big( 2\pi\int_{0}^{\infty} \vert f(t)\vert^2 \, dt \Big)^{1/2} , & \textnormal{ when } \beta = 0,\\
  \Big(\frac{2\pi \Gamma (\beta)}{2^{\beta}}\int_{0}^{\infty} \vert f(t)\vert^2 t^{-\beta} \, dt \Big)^{1/2} , & \textnormal{ when } \beta > 0.
  \end{array} \right.
$$

For future reference we will write explicitly the classical Paley-Wiener theorem (\cite{PW}, see also \cite[Thm. 11.9]{Du} or \cite[Thm. 19.2]{R}) and its generalization to the weighted Bergman spaces (a detailed proof for all $\alpha >-1$ can be found in \cite[Thm. 1]{DGM}) by using the above notation: For any $\alpha \geq -1$, the space $\mathcal{A}_{\alpha}^2 ( \Pi^+)$ is isometrically isomorphic to $L_{\alpha +1}^2$ under the Fourier transform $\mathcal{F}$. Indeed, $F \in \mathcal{A}_{\alpha}^2 ( \Pi^+)$ if and only if there exists a function $f\in L_{\alpha +1}^2$ such that
$$ F(w)= (\mathcal{F}^{-1} f)(w) = \int_0^{\infty} f(t) e^{iwt} \, dt , \quad w\in \Pi^+ .$$
Moreover,
\begin{equation}\label{pwnormeq}
\Vert F\Vert_{\mathcal{A}_{\alpha}^2 ( \Pi^+)}^2 =  \Vert f\Vert_{L_{\alpha +1}^2}^2 = b_{\alpha} \int_0^{\infty} \vert f(t) \vert^2 t^{-(\alpha +1)}  \, dt,
\end{equation}
where 
$$b_{\alpha}=\left\{ \begin{array}{ll}
2\pi, & \textnormal{ when } \alpha =-1, \\
2^{-\alpha}\pi \Gamma (\alpha +1), & \textnormal{ when } \alpha > -1.
\end{array} \right.
$$

\subsection*{Composition operators $C_{\tau}$ on $\mathcal{A}_{\alpha}^2 ( \Pi^+)$ for $\alpha \geq -1$}
Let $\tau : \Pi^+\longrightarrow \Pi^+$ be an analytic map. Denote by $C_{\tau}$ the composition map induced by $\tau$, 
$$C_{\tau} F = F \circ \tau , \quad F:\Pi^+ \longrightarrow\mathbb{C} \textnormal{ analytic} .$$
(The definition can be generalized by replacing $\Pi^+$ by any open subset of $\mathbb{C}$.)

\vspace{5pt}

In what follows we will consider the composition operator $C_{\tau}$ acting on $\mathcal{A}_{\alpha}^2 ( \Pi^+)$ for $\alpha \geq -1$, where $\tau$ is a linear fractional self-map of $\Pi^+ $ such that $\tau (w)\not\equiv w$. Recall that, in general, the linear fractional transformations (abbreviated by LFT) $\psi$ are meromorphic bijections $ \overline{\mathbb{C}} \longrightarrow \overline{\mathbb{C}} $ and they can be written in the form
$$\psi (z)= \frac{az+b}{cz+d}, \quad \textnormal{where } a,b,c,d \in \mathbb{C}, \, ad-bc\neq 0 .$$
The non-identity LFTs have exactly two \textit{fixed points}, i.e. points $z \in \overline{\mathbb{C}}$ such that $\psi (z)=z$, counting multiplicities. According to their behaviour at the fixed points, the LFTs can be classified into \textit{parabolic, hyperbolic, elliptic} or \textit{loxodromic} maps. For more information on LFTs, see \cite[Chapter 0]{Sh}, for instance.

\vspace{5pt}

A necessary and sufficient condition for the boundedness of $C_{\tau}$ (for any analytic $\tau : \Pi^+ \longrightarrow \Pi^+$) on $\mathcal{A}_{\alpha}^2 ( \Pi^+) $, for $\alpha \geq -1$, is that $\tau$ has a finite positive \textit{angular derivative} at infinity (see \cite[Thm. 15]{Ma3} or \cite[Thm. 3.1]{EJ} for the Hardy space and \cite[Thm. 3.4]{EW} for the weighted Bergman spaces). That is, $\tau$ must fix $\infty$ and the non-tangential limit is
$$\lim_{w \longrightarrow\infty} \frac{w}{\tau (w)} =: \tau' (\infty) \in (0,\infty).$$

It is a standard exercise to show that the linear fractional self-maps of $\Pi^+$ that fix $\infty$ are precisely of the form
\begin{equation}\label{tau} \tau (w)= \mu w + w_0, \textnormal{ where } \mu >0 \textnormal{ and Im}\, w_0\geq 0. 
\end{equation}
Depending on the value of $\mu$ in Equation \eqref{tau}, we have two different types of mappings that induce bounded composition operators on $\mathcal{A}_{\alpha}^2 ( \Pi^+)$ for $\alpha \geq -1$:
\begin{itemize}
\item[1.] $\tau$ is \textit{parabolic} when $\mu =1$ (and necessarily $w_0 \neq 0$). For a parabolic mapping it always holds that the derivative at its only fixed point is $1$, that is $\tau' (\infty)=1$.
\item[2.] $\tau$ is \textit{hyperbolic} when $\mu \in (0,1)\cup (1,\infty)$ and $\textnormal{Im}\, w_0 \geq 0 $. In this case, by definition $ \tau' (\infty)=1/\mu$. It holds that if $p$ and $q$ are, respectively, the attractive and repulsive fixed points of a hyperbolic mapping $\tau$, then
\begin{equation}\label{attrepder}
\tau' (p)= \frac{1}{\tau' (q)}.
\end{equation}
\end{itemize}
Note that in both cases $\tau$ is an automorphism of $ \Pi^+$ if and only if $\textnormal{Im}\, w_0 =0$. 

\vspace{5pt}

The condition that $\tau$ must fix $\infty$ for $C_{\tau}$ to be bounded on $\mathcal{A}_{\alpha}^2 ( \Pi^+)$ for $\alpha \geq -1$ excludes the elliptic, loxodromic and certain hyperbolic mappings from inducing composition operators on the half-plane. The counterparts of these mappings in the unit disc setting induce diagonal (the elliptic case) or compact (the loxodromic and hyperbolic non-automorphisms having fixed points in $\mathbb{D}$ and $\overline{\mathbb{C}}\setminus \overline{\mathbb{D}}$) composition operators, for instance, on $ \mathcal{A}_{\alpha}^2 ( \mathbb{D})$ for $\alpha \geq -1$.

\subsection*{Spectra of linear operators}
We recall some basic definitions and facts from general spectral theory that are used in the sequel. A suitable reference for the spectral theory of linear operators on Hilbert spaces is \cite{Mu}, for instance. Let $T$ be a bounded linear operator acting on a Hilbert space $\mathcal{H}$. The \textit{spectrum} of $T: \mathcal{H} \longrightarrow  \mathcal{H}$ is defined by
$$\sigma \big(T; \mathcal{H}\big)= \{ \lambda \in \mathbb{C} : T-\lambda \textnormal{ is not invertible on } \mathcal{H}\}.$$
We will also use the notation $\sigma (T)$ for the spectrum if the space is obvious from the context. Recall that $\sigma (T)\subset \mathbb{C}$ is always a non-empty compact set. The \textit{point spectrum}, i.e. the set of eigenvalues of $T$, is denoted by $\sigma_p (T)$. The \textit{approximative point spectrum} of $T$ is defined by
$$ \sigma_{a} (T)=\{ \lambda \in \mathbb{C}: \exists \textnormal{ a sequence } (h_n) \subset \mathcal{H} \textnormal{ of unit vectors such that } \Vert (T-\lambda)h_n\Vert \longrightarrow 0\}.$$
Obviously, $\sigma_p (T)\subseteq \sigma_{a} (T)$. The following inclusions always hold

\begin{equation}\label{spincl}
\partial \sigma (T) \subseteq \sigma_{a} (T)\subseteq\sigma (T),
\end{equation}
where $\partial \sigma (T) $ denotes the boundary of $ \sigma (T)$.

If the operators $T: \mathcal{H}_1 \longrightarrow  \mathcal{H}_1$ an $V: \mathcal{H}_2 \longrightarrow  \mathcal{H}_2$ are \textit{similar}, i.e. there exists an invertible operator $U: \mathcal{H}_1 \longrightarrow  \mathcal{H}_2$ such that $T= U^{-1}VU$, then their spectra, point spectra and approximative point spectra coincide. This is very useful for our purposes since if we can write $\tilde{\tau}=g\circ \tau \circ g^{-1}$, where $g$ is an automorphism of $\Pi^+$ fixing infinity (so that $C_g$ is bounded and invertible on $\mathcal{A}_{\alpha}^2 ( \Pi^+)$ for $\alpha \geq -1$ and $ C_g^{-1}=C_{g^{-1}}$), then 
$$ C_{\tilde{\tau}}= C_g^{-1} C_{\tau} C_{g}.$$
This means that $C_{\tilde{\tau}}$ and $C_{\tau}$ both acting on $\mathcal{A}_{\alpha}^2 ( \Pi^+)$ for $\alpha \geq -1$ are similar and therefore $\sigma (C_{\tilde{\tau}} ) =\sigma (C_{\tau})$. Note that the operators $C_{\tau}: \mathcal{A}_{\alpha}^2 ( \Pi^+) \longrightarrow \mathcal{A}_{\alpha}^2 ( \Pi^+)$ and $C_{\varphi}: \mathcal{A}_{\alpha}^2 (\mathbb{D}) \longrightarrow \mathcal{A}_{\alpha}^2 (\mathbb{D})$, where $\alpha \geq -1$ and $\varphi  = g^{-1}\circ \tau \circ g $ for an analytic bijection $g: \mathbb{D} \longrightarrow \Pi^+$, are not similar in general. This is due to the fact that the composition operator $ C_{g}: \mathcal{A}_{\alpha}^2 ( \Pi^+)  \longrightarrow \mathcal{A}_{\alpha}^2 (\mathbb{D})  $ is not bounded below. However, $C_{\varphi}$ on $\mathcal{A}_{\alpha}^2 (\mathbb{D}) $ is similar to a weighted composition operator $MC_{\tau}$ on $ \mathcal{A}_{\alpha}^2 (\Pi^+)$, where the multiplication operator $M$ is in some cases simple enough to allow one to deduce the spectrum of $C_{\tau}$ from that of $C_{\varphi}$.

We are also able to determine the \textit{essential spectrum} of $C_{\tau}$ in all cases considered. The essential spectrum of $T$ is the closed set defined by
$$\sigma_{ess} (T; \mathcal{H}) =  \{\lambda \in \mathbb{C}: T-\lambda \textnormal{ is not a Fredholm operator on }\mathcal{H} \} .$$
Recall that, by Atkinson's theorem, the Fredholmness of $T$ on $\mathcal{H}$ is equivalent to the invertibility of $ (T-\lambda ) + \mathcal{K}(\mathcal{H})$ in the Calkin algebra $ \mathcal{L}(\mathcal{H})/ \mathcal{K}(\mathcal{H})$. Here and in the sequel $ \mathcal{L}(\mathcal{H})$ denotes the set of bounded linear operators on $\mathcal{H}$ and $\mathcal{K}(\mathcal{H})$ the set of compact operators on $\mathcal{H}$.

We will use the fact that the eigenvalues of infinite multiplicity are contained in the essential spectrum and that the essential spectra of similar operators coincide.

We denote by $\rho (T)$ the spectral radius of $T$, which is defined by $\rho (T):= \max \{\vert\lambda\vert : \lambda \in \sigma (T) \}$ and satisfies the spectral radius formula
\begin{equation}\label{spradfor}
\rho (T)= \lim_{n\longrightarrow \infty} \Vert T^n\Vert^{1/n}.
\end{equation}

\vspace{5pt}

Concerning the bounded composition operators acting on $\mathcal{A}_{\alpha}^2 ( \Pi^+)$ for $\alpha \geq -1$, the spectral radius is obtained from the angular derivative of the inducing map: The results by Elliott, Jury and Wynn (see \cite[Thm. 3.4]{EJ} and \cite[Thm. 3.5]{EW}) guarantee that whenever $C_{\tau}$ is bounded on $\mathcal{A}_{\alpha}^2 ( \Pi^+)$ for $\alpha \geq -1$, the spectral radius as well as the operator norm and the essential operator norm of $C_{\tau}$ satisfy 
\begin{equation}\label{sprad}
\rho (C_{\tau}) = \Vert C_{\tau}\Vert =\Vert C_{\tau}\Vert_{ess} =\big( \tau' (\infty)\big)^{(\alpha + 2)/2} .
\end{equation} 
Here, the essential operator norm is defined for $T:\mathcal{H} \longrightarrow \mathcal{H}$ by $\Vert T \Vert_{ess} = \inf \{ \Vert T -K\Vert : K \textnormal{ is a compact operator on }\mathcal{H} \}$. Moreover, replacing $\rho (T)$ by the essential spectral radius $ \rho_{ess} (T):= \max \{\vert\lambda\vert : \lambda \in \sigma_{ess} (T) \}$ and $ \Vert \cdot\Vert$ by $ \Vert \cdot\Vert_{ess}$ in Equation \eqref{spradfor} we have the essential spectral radius formula.

\section{The spectrum of $C_{\tau}$ when $\tau$ is a parabolic self-map of $\Pi^+$}\label{secpara}

In this section we consider the composition operator $C_{\tau}: \mathcal{A}_{\alpha}^2 ( \Pi^+) \longrightarrow \mathcal{A}_{\alpha}^2 ( \Pi^+)$ for $\alpha \geq -1$, induced by a parabolic self-map of $\Pi^{+}$, that is
$$\tau (w)= w+w_0, \quad \textnormal{where } \textnormal{Im}\, w_0 \geq 0, \, w_0\neq 0.$$
Recall that the only fixed point of $\tau$ is $\infty$ and $\tau' (\infty)=1$. Moreover, $\tau$ is an automorphism of $ \Pi^+$ if and only if $\textnormal{Im}\, w_0 =0$ ($w_0\neq 0$), in which case $C_{\tau}$ is invertible. 

\vspace{5pt}

Even though we cannot find a composition operator $ C_{g}: \mathcal{A}_{\alpha}^2 ( \Pi^+)  \longrightarrow \mathcal{A}_{\alpha}^2 (\mathbb{D}) $ (see Remark \ref{notsimi}) that would give a similarity between the operators $C_{\tau}: \mathcal{A}_{\alpha}^2 ( \Pi^+) \longrightarrow \mathcal{A}_{\alpha}^2 ( \Pi^+) $ and $C_{\varphi} : \mathcal{A}_{\alpha}^2 (\mathbb{D}) \longrightarrow \mathcal{A}_{\alpha}^2 (\mathbb{D})$, where $\varphi  = g^{-1}\circ \tau \circ g $, it turns out that their spectra are the same. (For the spectra of parabolic composition operators acting on $\mathcal{A}_{\alpha}^2 ( \mathbb{D})$ for $\alpha \geq -1$, see \cite[Thm. 7.5 and Cor. 7.42]{CM}.) The (essential) spectra of invertible ($\textnormal{Im}\, w_0 =0$) and self-adjoint (corresponding the case $\textnormal{Re}\, w_0 =0$ in the upper half-plane) parabolic composition operators on the Hardy space of the half-plane have been computed in \cite[Thm. 2.7 and Thm. 3.1]{Ma4}. Our proof is very different and works on $\mathcal{A}_{\alpha}^2 ( \Pi^+) $, for all $\alpha >-1$, as well. We are also able to show that the essential spectrum coincides with the spectrum in all cases.

\vspace{5pt}

\begin{thm}\label{sppara}
Let $\tau : \Pi^+ \longrightarrow \Pi^+$ be a parabolic map of the form $\tau (w)= w+w_0$, where $\textnormal{Im}\, w_0 \geq 0$ and $w_0\neq 0$. Then the spectrum of $C_{\tau}$ acting on $\mathcal{A}_{\alpha}^2 ( \Pi^+) $, for all $\alpha \geq -1$, is

\vspace{5pt}

\begin{itemize}
\item[i)] $ \sigma \big( C_{\tau}\big)= \mathbb{T}$, when $w_0\in \mathbb{R}$,

\vspace{5pt}

\item[ii)] $\sigma \big( C_{\tau}\big) = \{e^{iw_0t} : t\in [0,\infty ) \} \cup \{0\}$, when $w_0\in \Pi^+$.
\end{itemize}

\vspace{5pt}

\noindent Moreover, the essential spectrum of $C_{\tau}$ coincides with its spectrum in both cases.
\end{thm}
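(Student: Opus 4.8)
The plan is to pass to the Fourier/Paley--Wiener picture and recognize $C_\tau$ as a multiplication operator. By the Paley--Wiener isomorphism recalled in Section~\ref{prelim}, the map $\mathcal{F}: \mathcal{A}_{\alpha}^2 ( \Pi^+) \longrightarrow L_{\alpha+1}^2$ is an isometric isomorphism, so $C_\tau$ is unitarily equivalent to the operator $\mathcal{F} C_\tau \mathcal{F}^{-1}$ acting on $L_{\alpha+1}^2$, which I would compute directly. Writing $F = \mathcal{F}^{-1} f$ and using $\tau(w) = w + w_0$,
\[
(C_\tau F)(w) = F(w+w_0) = \int_0^{\infty} f(t)\, e^{iw_0 t}\, e^{iwt}\, dt ,
\]
which shows that $\mathcal{F} C_\tau \mathcal{F}^{-1}$ is the multiplication operator $M_\phi : f \mapsto \phi f$ with symbol $\phi(t) = e^{iw_0 t}$, $t \in (0,\infty)$. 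Since $\textnormal{Im}\, w_0 \geq 0$ we have $|\phi(t)| = e^{-t\,\textnormal{Im}\, w_0} \leq 1$, so $\phi \in L^{\infty}$ and $M_\phi$ is bounded (consistently with the boundedness of $C_\tau$). It therefore suffices to determine the spectrum and the essential spectrum of $M_\phi$ on $L_{\alpha+1}^2$.

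The spectrum of a multiplication operator on an $L^2$-space is the essential range of its symbol with respect to the underlying measure. Here the measure defining $L_{\alpha+1}^2$ has density a constant multiple of $t^{-(\alpha+1)}$ (a constant times Lebesgue measure when $\alpha=-1$), which is strictly positive and finite on $(0,\infty)$; hence this measure is mutually absolutely continuous with Lebesgue measure on $(0,\infty)$ and, in particular, non-atomic. Because $\phi$ is continuous on $(0,\infty)$, its essential range with respect to such a measure is exactly the topological closure of its range: every $\phi(t_0)$ lies in the essential range, since continuity forces preimages of small balls around $\phi(t_0)$ to have positive measure, and no point outside $\overline{\{\phi(t): t>0\}}$ can. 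Thus $\sigma(M_\phi) = \overline{\{e^{iw_0 t}: t \in (0,\infty)\}}$.

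It then remains to identify this closure. When $w_0 \in \mathbb{R}$ (and $w_0\neq 0$), the argument $w_0 t$ sweeps an unbounded interval, so $e^{iw_0 t}$ covers all of $\mathbb{T}$ and the closure equals $\mathbb{T}$, giving (i). When $w_0 \in \Pi^+$, writing $w_0 = a+ib$ with $b>0$ gives $\phi(t) = e^{iat}e^{-bt}$, a logarithmic spiral that is injective on $(0,\infty)$; as $t \to 0^+$ it tends to $e^{iw_0\cdot 0}=1$ and as $t \to \infty$ it tends to $0$, with no interior self-accumulation. Hence the closure is $\{e^{iw_0 t} : t \in [0,\infty)\} \cup \{0\}$, which is (ii).

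Finally, for the essential spectrum I would exploit the non-atomicity of the measure, and this is the step I expect to require the most care. The claim to verify is that, for a multiplication operator on a non-atomic measure space, every point $\lambda$ of the essential range lies in the essential spectrum. Given such a $\lambda$, each set $A_n = \{t : |\phi(t) - \lambda| < 1/n\}$ has positive measure, and by non-atomicity I would extract positive-measure subsets $B_n \subseteq A_n$ with pairwise disjoint supports, so that the normalized indicator functions $g_n = \chi_{B_n}/\Vert\chi_{B_n}\Vert$ form an orthonormal sequence with $\Vert(M_\phi - \lambda)g_n\Vert \leq 1/n \to 0$. Such a singular Weyl sequence prevents $M_\phi - \lambda$ from being Fredholm, so $\lambda \in \sigma_{ess}(M_\phi)$; together with the trivial inclusion $\sigma_{ess}(M_\phi) \subseteq \sigma(M_\phi)$ this yields equality in both cases, and unitary equivalence transports it back to $C_\tau$. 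I expect this production of the singular sequence and the confirmation of non-Fredholmness to be the only genuinely delicate part, the rest being a direct translation through Paley--Wiener.
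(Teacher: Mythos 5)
Your proposal is correct and follows essentially the same route as the paper: the Paley--Wiener transform conjugates $C_\tau$ to multiplication by $e^{iw_0t}$ on $L^2_{\alpha+1}$, the inclusion of the spectrum in $\overline{\{e^{iw_0t}: t\geq 0\}}$ comes from the boundedness of the multiplier $t \mapsto 1/(e^{iw_0t}-\lambda)$ (your essential-range formulation encodes exactly this computation), and the reverse inclusion into the essential spectrum is obtained from an (almost) orthonormal sequence of normalized indicator functions supported on disjoint sets where the symbol is within $1/n$ of $\lambda$ --- the paper takes these concretely as indicators of the intervals $[t_0+\epsilon_{n+1}, t_0+\epsilon_n]$ and invokes the same non-Fredholmness criterion. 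Your one loosely stated step, extracting pairwise disjoint positive-measure sets $B_n \subseteq A_n$, is harmless in this concrete setting since each $A_n$ is open and contains shrinking intervals accumulating at any preimage $t_0$ of $\lambda$ (and, for $\lambda = 0$ with $\mathrm{Im}\, w_0 > 0$, tails $(t_n,\infty)$), which is precisely the paper's construction.
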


\begin{proof}
Denote 
$$ \mathcal{S}:= \overline{ \{ e^{iw_0t} : t\in [0,\infty ) \} } = \left\{ \begin{array}{ll}
\mathbb{T}, & \textnormal{ when } w_0\in \mathbb{R}, \\
\{ e^{iw_0t} : t\in [0,\infty ) \} \cup \{ 0\}, & \textnormal{ when } \textnormal{Im}\, w_0 >0.
\end{array}
\right.
$$

We will split the proof into two parts. In \textit{Step 1} we will prove that $ \sigma \big( C_{\tau}; \mathcal{A}_{\alpha}^2 ( \Pi^+) \big) \subseteq \mathcal{S}$ for all $\alpha \geq -1$ and in \textit{Step 2} we show that each point in $\mathcal{S} $ belongs to the essential spectrum of $  C_{\tau}$ on $ \mathcal{A}_{\alpha}^2 ( \Pi^+)$ for all $\alpha \geq -1$.

\vspace{5pt}

\textit{Step 1.} Note first that $ \big( \mathcal{F}^{-1}( C_{\tau} G) \big)(t) = e^{iw_0t} (\mathcal{F}^{-1}G) (t)$ for any $G\in \mathcal{A}_{\alpha}^2 ( \Pi^+) $ and $t>0$. Therefore, by the Paley-Wiener theorem (see p. \pageref{pwnormeq}), the Fourier transform gives us a similarity between the operators $C_{\tau} : \mathcal{A}_{\alpha}^2 ( \Pi^+) \longrightarrow \mathcal{A}_{\alpha}^2 ( \Pi^+)$ and $\widehat{C}_{\tau} : L_{\alpha +1}^2 \longrightarrow L_{\alpha +1}^2$, where
\begin{equation}\label{ceetausim}\widehat{C}_{\tau}f(t)=e^{iw_0t}f(t).
\end{equation}

Since $\widehat{C}_{\tau} $ is a multiplication operator on $L_{\alpha +1}^2$, we can easily find points $\lambda \in \mathbb{C}$ for which $ \widehat{C}_{\tau} -\lambda$ \textit{is} invertible. Let $\lambda \in \mathbb{C}\setminus \mathcal{S}$. Since $\mathbb{C}\setminus \mathcal{S}$ is an open set, $\textnormal{dist}\, (\lambda, \mathcal{S})=:\delta >0$. For all $t> 0$, define a function $k$ by $k(t)=\frac{1}{e^{iw_0t}-\lambda}$. Since $ \vert k(t)\vert \leq 1/\delta$, the multiplication operator $M_k$, where $M_kf =kf$, is bounded on $ L_{\alpha +1}^2 $. Now,
$$M_k(\widehat{C}_{\tau} - \lambda)f =k (e^{iw_0t}-\lambda )f = f= (e^{iw_0t}-\lambda )kf = (\widehat{C}_{\tau} - \lambda )M_k f,$$
which means that $M_k$ is the inverse of $\widehat{C}_{\tau} - \lambda $ and therefore $\lambda \in \mathbb{C}\setminus \sigma \big( \widehat{C}_{\tau}\big)$. It follows that $ \sigma \big( C_{\tau}; \mathcal{A}_{\alpha}^2 ( \Pi^+) \big) = \sigma \big( \widehat{C}_{\tau} ; L_{\alpha +1}^2\big)\subseteq \mathcal{S} $.

\vspace{5pt}

\textit{Step 2.} In order to show that $\mathcal{S}\subseteq \sigma_{ess} \big( C_{\tau}; \mathcal{A}_{\alpha}^2 ( \Pi^+) \big)$ for all $\alpha \geq -1$ we need the following fact (see \cite[Thm. XI.2.3]{Con}): \textit{Let $T:\mathcal{H} \longrightarrow \mathcal{H} $ be a bounded linear operator on a Hilbert space $\mathcal{H}$. If there exists an orthonormal sequence $(f_n)\subset \mathcal{H}$ such that}
$$
\Vert (T-\lambda) f_n\Vert \longrightarrow 0, \textnormal{ as } n\longrightarrow \infty ,
$$
\textit{then} $\lambda \in \sigma_{ess} \big(T; \mathcal{H}\big)$.

\vspace{5pt}

As in \textit{Step 1} we will use the similarity provided by the Paley-Wiener theorem. Now our aim is to find an orthogonal sequence of functions $(g_n)$ in $L_{\alpha +1}^2$ for any $\alpha \geq -1$ and $\lambda \in \mathcal{S}$,
$$\frac{\Vert (\widehat{C}_{\tau} -\lambda)g_n \Vert_{L_{\alpha +1}^2}}{\Vert g_n \Vert_{L_{\alpha +1}^2 }} \longrightarrow 0,\textnormal{ as } n \longrightarrow \infty.$$ 

Fix $t_0 \geq 0$ and $w_0\in \Pi^+ \cup \mathbb{R}$. Let $\lambda =e^{iw_0t_0}\in\mathcal{S}$ and $(\epsilon_n)\subset \mathbb{R}$ be any sequence satisfying $\epsilon_n >\epsilon_{n+1} $ and $\epsilon_n \longrightarrow 0$, as $n\longrightarrow \infty$ (for instance, set $\epsilon_n =1/n$). Consider the orthogonal sequence $(g_n)\subset L_{\alpha +1}^2$ of characteristic functions 
$$ g_n= \chi_{\big[t_0 + \epsilon_{n+1} , t_0 + \epsilon_{n} ]} $$
for which it holds that
$$(\widehat{C}_{\tau} -\lambda ) g_n= (e^{iw_0t} -e^{iw_0t_0}) g_n .$$ 
Now we have that
\begin{equation*}
\frac{\Vert (\widehat{C}_{\tau} -\lambda ) g_n \Vert_{L_{\alpha +1}^2}^2}{\Vert g_n \Vert_{L_{\alpha +1}^2}^2}
\leq \sup_{t_0 +\epsilon_{n+1} \leq t \leq t_0 +\epsilon_{n}} \vert e^{iw_0t} - e^{iw_0t_0}\vert^2 \longrightarrow 0, \textnormal{ as } n \longrightarrow \infty.
\end{equation*} 

It follows that $\mathcal{S} \subseteq \sigma_{ess} \big(\widehat{C}_{\tau}; L_{\alpha +1}^2\big)= \sigma_{ess} \big( C_{\tau}; A_{\alpha }^2 (\Pi^+)\big)$ for all $\alpha \geq -1$. 

\end{proof}

\section{The spectrum of $C_{\tau}$ when $\tau$ is a hyperbolic self-map of $\Pi^+$}\label{hypsec}

We will first consider the case where $\tau$ is a hyperbolic automorphism of $\Pi^+$, that is, $\tau$ is of the form $\tau (w)=\mu w + w_0$, where $\mu\in (0,1)\cup (1,\infty)$ and $w_0\in \mathbb{R}$. Any mapping of this form can be written as 
$$\tau = g \circ \tilde{\tau} \circ g^{-1},$$
where $ \tilde{\tau} (w)=\mu w$ and $g(w)= w+ \frac{w_0}{1-\mu}$. Since $g$ is an automorphism of $\Pi^+$ fixing $\infty$, it induces an invertible composition operator $C_g$ on $\mathcal{A}_{\alpha}^2 ( \Pi^+) $ so that $ C_{\tau}$ and $C_{ \tilde{\tau}} $ acting on $\mathcal{A}_{\alpha}^2 ( \Pi^+)$, for $\alpha \geq -1$, are similar. Therefore, in order to compute the spectrum for $C_{\tau}$ it is enough to compute the spectrum for the composition operator induced by the mapping
$$w \mapsto \mu w,\quad \mu\in (0,1)\cup (1,\infty).$$
We will first compute the spectrum and the essential spectrum in the case $\mu \in (0,1)$ (Theorem \ref{sphypauto}) and from this we will also get the case $\mu \in (1,\infty)$ (Corollary \ref{sphypautoinverse}). The spectrum and the essential spectrum of the invertible hyperbolic composition operators in the Hardy space of the (right) half-plane $\mathbb{C}_+$ have been computed by Matache in \cite[Thm. 2.12]{Ma4}. Matache's proof for the spectrum is based on the similarity between $C_{\tau}$ on $H^2 (\mathbb{C}_+)$ and a certain weighted composition operator on $H^2 (\mathbb{D})$. The spectrum is then obtained from the results in \cite[Thm. 4.8]{HLNS}. It it possible that this idea could also be used in the weighted Bergman spaces. Our approach is different and more direct revealing simultaneously also the essential spectrum on all of the spaces.

\vspace{5pt}

It is worth noting that the result in this case differs essentially from the unit disc setting, where, in the classical Hardy and the weighted Bergman spaces, the spectrum of an invertible hyperbolic composition operator is always an annulus centred at the origin (see \cite[Thm. 7.4]{CM}). In the half-plane setting we get that the spectrum is always a circle centred at the origin with radius depending on the space.

\vspace{5pt}

\begin{thm}\label{sphypauto}
Let $\tau$ be a hyperbolic automorphism of $\Pi^+$ of the form $\tau (w)=\mu w $, where $ \mu \in (0,1)$. Then the spectrum and the essential spectrum of $C_{\tau}$ acting on $ \mathcal{A}_{\alpha}^2 ( \Pi^+)$, for $\alpha \geq -1$, are
$$ \sigma \big( C_{\tau} ; \mathcal{A}_{\alpha}^2 ( \Pi^+)\big) = \sigma_{ess} \big( C_{\tau} ; \mathcal{A}_{\alpha}^2 ( \Pi^+)\big)= \big\{ \lambda  \in \mathbb{C} : \vert \lambda \vert = \mu^{-(\alpha +2)/2}\big\} .$$

\end{thm}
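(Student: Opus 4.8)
The plan is to imitate the two-step strategy of the parabolic case (Theorem \ref{sppara}), with the multiplication operator there replaced by a dilation. First, by the Paley--Wiener theorem the inverse Fourier transform gives a similarity between $C_\tau$ on $\mathcal{A}_\alpha^2(\Pi^+)$ and an operator $\widehat{C}_\tau$ on $L_{\alpha+1}^2$. Computing $\mathcal{F}^{-1}(C_\tau G)$ for $\tau(w)=\mu w$, i.e. inserting $G(\mu w)=\int_0^\infty f(t)e^{i\mu w t}\,dt$ and substituting $u=\mu t$, shows that $\widehat{C}_\tau$ is now the weighted dilation $(\widehat{C}_\tau f)(t)=\mu^{-1}f(t/\mu)$ rather than a multiplier. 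So everything reduces to finding the spectrum and essential spectrum of this dilation on $L_{\alpha+1}^2$.

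The second step is to diagonalise the dilation by passing to a logarithmic variable, which converts dilations into translations. Writing $\beta=\alpha+1\ge 0$, I would introduce the map $U\colon L_{\alpha+1}^2\longrightarrow L^2(\mathbb{R})$, $(Uf)(x)=\sqrt{b_\alpha}\,e^{(1-\beta)x/2}f(e^x)$; the change of variables $t=e^x$ together with the weight $t^{-\beta}$ appearing in \eqref{pwnormeq} shows that $U$ is an isometry onto $L^2(\mathbb{R})$, hence unitary. A direct computation then yields
\[
\big(U\widehat{C}_\tau U^{-1}g\big)(x)=\mu^{-(\alpha+2)/2}\,g\big(x-\log\mu\big),
\]
so that $\widehat{C}_\tau$ is unitarily equivalent to $r\,T$, where $r=\mu^{-(\alpha+2)/2}$ and $T$ is the unitary translation $Tg(x)=g(x-\log\mu)$. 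I expect this weight bookkeeping to be the only delicate point: the exponents coming from the Jacobian $e^x\,dx$ and from the weight $e^{-\beta x}$ must combine to give \emph{exactly} the scalar $\mu^{-(\alpha+2)/2}$, which of course has to match the spectral radius $\rho(C_\tau)=(\tau'(\infty))^{(\alpha+2)/2}=\mu^{-(\alpha+2)/2}$ prescribed by \eqref{sprad}.

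It then remains to record the spectrum of a scalar multiple of a translation. Since $T$ is unitary, $\sigma(T)\subseteq\mathbb{T}$; and Fourier-transforming on $\mathbb{R}$ turns $T$ into multiplication by the unimodular symbol $\xi\mapsto e^{-i\xi\log\mu}$, which, as $\log\mu\neq 0$, has essential range all of $\mathbb{T}$, so $\sigma(T)=\mathbb{T}$. Its level sets are discrete, hence of measure zero, so the symbol has no eigenvalues; and for each $\lambda\in\mathbb{T}$ one builds an orthonormal Weyl sequence from characteristic functions of shrinking disjoint intervals on which the symbol is nearly the constant $\lambda$ — exactly the device used in \emph{Step 2} of the parabolic proof — giving $\sigma(T)=\sigma_{ess}(T)=\mathbb{T}$. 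Scaling by $r$ and transporting back through $U$ and $\mathcal{F}$ then yields $\sigma(C_\tau)=\sigma_{ess}(C_\tau)=\{\lambda\in\mathbb{C}:|\lambda|=\mu^{-(\alpha+2)/2}\}$. As an independent check on the inclusion $\sigma(C_\tau)\subseteq\{|\lambda|=r\}$, one may avoid the logarithmic substitution altogether: $\rho(C_\tau)=r$ forces $|\lambda|\le r$, while the same applied to the inverse automorphism $\tau^{-1}(w)=w/\mu$, for which $\rho(C_{\tau^{-1}})=\mu^{(\alpha+2)/2}=1/r$, forces $|\lambda|\ge r$ by the spectral mapping theorem for inverses.
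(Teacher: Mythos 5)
Your proposal is correct, and every step checks out --- in particular the weight bookkeeping: the conjugation constant is indeed $\mu^{-(\beta+1)/2}=\mu^{-(\alpha+2)/2}$ for $\beta=\alpha+1$, uniformly in $\alpha\geq -1$ including the Hardy case --- but you organize the argument genuinely differently from the paper. The paper also starts from the Paley--Wiener similarity with the weighted dilation $\widehat{C}_{\tau}f(x)=\mu^{-1}f(x/\mu)$ on $L_{\alpha+1}^2$, but it never diagonalizes this operator: the inclusion of the spectrum in the circle is obtained by a direct change-of-variables computation showing $\Vert C_{\tau}F\Vert_{\mathcal{A}_{\alpha}^2(\Pi^+)}=\mu^{-(\alpha+2)/2}\Vert F\Vert_{\mathcal{A}_{\alpha}^2(\Pi^+)}$, so that a scalar multiple of the invertible operator $C_{\tau}$ is an invertible isometry with spectrum on the unit circle; and the reverse inclusion into $\sigma_{ess}$ is proved by truncating the formal eigenfunction $g(x)=x^{\alpha/2}\mu^{-(\alpha+2)/2}e^{-it\log_{\mu}x}$ over intervals $[a_n,a_{n-1}]$ with $a_n=e^{-1-2-\cdots-n}$, checking by hand that the resulting orthogonal sequence satisfies $\Vert(\widehat{C}_{\tau}-\lambda)g_n\Vert_{L_{\alpha+1}^2}^2/\Vert g_n\Vert_{L_{\alpha+1}^2}^2 = O(1/n)$, and invoking the same criterion from Conway used in the parabolic case. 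Your route --- conjugating by the logarithmic unitary $U$ to turn the dilation into $\mu^{-(\alpha+2)/2}$ times a translation of $L^2(\mathbb{R})$, then Fourier transforming the translation into a unimodular multiplier --- buys structural clarity: it reduces the hyperbolic case to exactly the multiplier analysis of the parabolic Theorem \ref{sppara}, delivers both inclusions and the essential spectrum from one computation, and in fact demystifies the paper's ansatz, since $g$ is precisely the pullback under $U$ of a plane wave and the truncations $g_n$ are the corresponding wave-packet Weyl vectors for the translation. What the paper's version buys is self-containedness on $L_{\alpha+1}^2$: no second change of variables, no Fourier analysis on $\mathbb{R}$ or essential-range facts, just one explicit orthogonal sequence and an elementary norm identity. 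Your closing alternative for the circle inclusion --- combining $\rho(C_{\tau})=\mu^{-(\alpha+2)/2}$ from \eqref{sprad} with $\rho(C_{\tau}^{-1})=\mu^{(\alpha+2)/2}$ and $\sigma(C_{\tau}^{-1})=\{\lambda^{-1}:\lambda\in\sigma(C_{\tau})\}$ --- is also sound, and is precisely the device the paper itself uses in Corollary \ref{sphypautoinverse} to pass from $\mu$ to $\mu^{-1}$.
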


\begin{proof}
The fixed points of $\tau$ are $0$ (attractive) and $\infty$. Moreover, by \eqref{attrepder}
$$\tau' (\infty) = 1/\tau' (0)=\mu^{-1},$$
so that the spectral radius of $ C_{\tau}$ acting on $ \mathcal{A}_{\alpha}^2 ( \Pi^+)$ is $\mu^{-(\alpha +2)/2} $ (see \eqref{sprad}). We will split the proof into two parts: in \textit{Step 1} we show that $\sigma \big( C_{\tau} ; \mathcal{A}_{\alpha}^2 ( \Pi^+)\big) \subseteq \big\{ \lambda  \in \mathbb{C} : \vert \lambda \vert = \mu^{-(\alpha +2)/2}\big\} =: \mathcal{Y}_{\alpha}$ and in \textit{Step 2} we prove that each point in $\mathcal{Y}_{\alpha} $ belongs to the essential spectrum of $C_{\tau} $ on $\mathcal{A}_{\alpha}^2 ( \Pi^+)$.

\vspace{5pt}

\textit{Step 1.} Let $F \in \mathcal{A}_{\alpha}^2 ( \Pi^+)$, $\alpha >-1$, and $\lambda \in \mathbb{C}$. Now, by the change of variable $ w \longrightarrow \mu^{-1}w$, we get that (for $w=x+iy$)
\begin{equation*}
\begin{split}
\Vert C_{\tau} F\Vert_{\mathcal{A}_{\alpha}^2 ( \Pi^+)}^2 &= \int_{\Pi^+} \vert F(\mu w)\vert^2 y^{\alpha} \, dx \, dy = \int_{\Pi^+} \vert F( w)\vert^2 y^{\alpha} \mu^{-\alpha -2}\, dx \, dy \\
&=  \mu^{-\alpha -2} \Vert F\Vert_{\mathcal{A}_{\alpha}^2 ( \Pi^+)}^2.
\end{split}
\end{equation*}
For $H^2 (\Pi^+)$ ($\alpha =-1$), we get similarly that
$$\Vert C_{\tau} F\Vert_{H^2 (\Pi^+)}^2 =\sup_{\mu y>0} \int_{-\infty}^{\infty} \vert F(\mu x+i\mu y)\vert^2 \, dx =\sup_{y>0} \int_{-\infty}^{\infty} \vert F(x+iy)\vert^2 \mu^{-1} \, dx = \mu^{-1} \Vert  F\Vert_{H^2 (\Pi^+)}^2.$$
Since $\Vert C_{\tau} F\Vert_{\mathcal{A}_{\alpha}^2 ( \Pi^+)} =\mu^{-(\alpha +2)/2} \Vert F\Vert_{\mathcal{A}_{\alpha}^2 ( \Pi^+)}$ for all $\alpha \geq -1$ and the spectrum of an isometric operator is contained in the unit circle, we are able to deduce that
$$\sigma \big( C_{\tau} ; \mathcal{A}_{\alpha}^2 ( \Pi^+)\big) \subseteq \mathcal{Y}_{\alpha}.$$

\vspace{5pt}

\textit{Step 2.} Here we use a similar technique as in \textit{Step 2} on p. \pageref{sppara}. Since $ \mathcal{F}^{-1}\big(G (\mu x)\big)= \frac{1}{\mu}( \mathcal{F}^{-1}G )(x/\mu) $ for any $G\in \mathcal{A}_{\alpha}^2 ( \Pi^+)$ and $x>0$, we have, by the (versions of) Paley-Wiener theorem, that the operator $  \widehat{C}_{\tau} : L_{\alpha +1}^2\longrightarrow  L_{\alpha +1}^2$ defined by
$$ \widehat{C}_{\tau} f (x) = \frac{1}{\mu} f(x/\mu) , \quad x>0,$$
is similar to $C_{\tau}: \mathcal{A}_{\alpha}^2 ( \Pi^+)\longrightarrow \mathcal{A}_{\alpha}^2 ( \Pi^+)$, for all $\alpha \geq -1$. For each $\lambda\in  \mathcal{Y}_{\alpha}$, we find an orthogonal sequence $(g_n)\subset L_{\alpha +1}^2$ such that 
$$ \frac{\Vert ( \widehat{C}_{\tau} - \lambda )g_n \Vert_{L_{\alpha +1}^2}}{\Vert g_n\Vert_{L_{\alpha +1}^2}} \longrightarrow 0, \textnormal{ as }n\longrightarrow\infty .$$

Fix $\lambda_{\alpha} = \mu^{-(\alpha +2)/2}e^{it}$, where $t\in \mathbb{R}$, and define (cf. \cite[Thm. 3.2]{Hi} in the Dirichlet space case)
$$g(x)= x^{\alpha / 2} \mu^{-(\alpha +2)/2 }e^{-it \log_{\mu} x} , \quad x>0.$$
By inspection,
\begin{equation}\label{geemyy}
\frac{1}{\mu }g(x/ \mu) = \mu^{-(\alpha +2)/2}e^{it} g(x) = \lambda_{\alpha} g(x).
\end{equation}

Let $a_0=1$ and $a_n = e^{-n}a_{n-1}=e^{-1-2- \cdots -n}$ for all $n\geq 1$. Note that $ a_n < a_{n-1}$ for all $n\in \mathbb{N}$ and that $a_n \longrightarrow 0$, as $n\longrightarrow\infty$. Define a sequence $(g_n)\subset L_{\alpha +1}^2$ by setting
$$ g_n = g\cdot \chi_{[a_n, a_{n-1}]}. $$
Since the sequence $(a_n)$ is decreasing, $ \langle g_n ,g_m \rangle_{L_{\alpha +1}^2} =0$ for all $n \neq m$. Moreover,
\begin{equation}\label{normg}
\begin{split}
 \Vert g_n \Vert_{L_{\alpha +1}^2}^2 &= b_{\alpha}\int_{a_n}^{a_{n-1}} \big\vert x^{\alpha / 2} \mu^{-(\alpha +2)/2 }e^{-it \log_{\mu} x} \big\vert^2 x^{-(\alpha +1)}\, dx = b_{\alpha} \mu^{-(\alpha +2)} \int_{a_n}^{a_{n-1}} x^{-1}\, dx \\
 &= b_{\alpha} \mu^{-(\alpha +2)} n.
\end{split}
\end{equation}

On the other hand, by using \eqref{geemyy}, we get (for $n$ big enough)
\begin{equation}\label{normctau}
\begin{split}
 \Vert  ( \widehat{C}_{\tau} - \lambda_{\alpha})g_n \Vert_{L_{\alpha +1}^2}^2 &= b_{\alpha} \int_0^{\infty} \big\vert \lambda_{\alpha} g(x)\chi_{[\mu a_n ,\mu a_{n-1} ]} -\lambda_{\alpha} g(x)\chi_{ [a_n ,a_{n-1} ]} \big\vert^2   x^{-(\alpha +1)} \, dx   \\
 &\stackrel{(*)}{=} b_{\alpha} \vert \lambda_{\alpha} \vert^2 \Big( \int_{\mu a_n}^{a_n} \vert g(x)\vert^2 x^{-(\alpha +1)}\, dx + \int_{\mu a_{n-1}}^{a_{n-1}} \vert g(x)\vert^2 x^{-(\alpha +1)}\, dx\Big)  \\
 &=  b_{\alpha}\vert \lambda_{\alpha} \vert^2 \mu^{-(\alpha +2)} \Big( \int_{\mu a_n}^{a_n} x^{-1}\, dx + \int_{\mu a_{n-1}}^{a_{n-1}} x^{-1}\, dx\Big)  \\
 &= -2b_{\alpha}\vert \lambda_{\alpha} \vert^2 \mu^{-(\alpha +2)} \ln \mu .
 \end{split}
\end{equation}
In $(*)$ we have used the fact that $a_n <\mu a_{n-1}$ for all $n > \ln (1/ \mu )$ and so the function $ g(x) \chi_{[\mu a_n ,\mu a_{n-1}]}(x)  -  g(x)\chi_{ [a_n ,a_{n-1}]}$ vanishes on $[0,\mu a_n]\cup [ a_n, \mu a_{n-1}]$.

By Equations \eqref{normg} and \eqref{normctau} we have that
$$ \frac{\Vert ( \widehat{C}_{\tau} -\lambda_{\alpha} ) g_n\Vert_{L_{\alpha +1}^2}^2}{\Vert  g_n \Vert_{L_{\alpha +1}^2}^2} = \frac{ -2\vert \lambda_{\alpha} \vert^2  \ln \mu}{n} \longrightarrow 0, \textnormal{ as } n\longrightarrow \infty  .$$
Now \cite[Thm. XI.2.3]{Con} guarantees that $\lambda_{\alpha} \in \sigma_{ess} \big( \widehat{C}_{\tau} ; L_{\alpha +1}^2\big)$. Furthermore, by similarity, this yields
$$\mathcal{Y}_{\alpha}:= \{\lambda \in \mathbb{C} :  \vert \lambda\vert = \mu^{-(\alpha +2)/2}\}\subseteq \sigma_{ess} \big( C_{\tau}; \mathcal{A}_{\alpha}^2 ( \Pi^+)\big).$$

\end{proof}

From the (essential) spectrum of $C_{\tau}$ we will obtain the (essential) spectrum of the inverse of $C_{\tau}$ by using general spectral theory.

\begin{cor}\label{sphypautoinverse}
Let $\tau^{-1} (w)= \mu^{-1} w$, where $\mu\in (0,1)$. The spectrum of $C_{\tau^{-1}}$ acting on $ \mathcal{A}_{\alpha}^2 ( \Pi^+)$, $\alpha \geq -1$, is
$$ \sigma \big( C_{\tau^{-1}} ; \mathcal{A}_{\alpha}^2 ( \Pi^+)\big) =\sigma_{ess} \big( C_{\tau^{-1}} ; \mathcal{A}_{\alpha}^2 ( \Pi^+)\big)= \big\{ \lambda  \in \mathbb{C} : \vert \lambda \vert = \mu^{(\alpha +2)/2}\big\} .$$
\end{cor}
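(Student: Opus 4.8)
The plan is to exploit the elementary observation that $C_{\tau^{-1}}$ is nothing but the inverse of the operator $C_{\tau}$ studied in Theorem \ref{sphypauto}. Since $\tau^{-1}\circ \tau = \tau \circ \tau^{-1} = \mathrm{id}$ and composition operators satisfy $C_{\sigma}C_{\tau}=C_{\tau\circ\sigma}$, we have $C_{\tau^{-1}}C_{\tau}=C_{\tau\circ\tau^{-1}}=C_{\mathrm{id}}=I$ and likewise $C_{\tau}C_{\tau^{-1}}=I$, so that $C_{\tau^{-1}}=(C_{\tau})^{-1}$ as a bounded operator on $\mathcal{A}_{\alpha}^2(\Pi^+)$ for every $\alpha\geq -1$. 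Note that $C_{\tau}$ is genuinely invertible: by Theorem \ref{sphypauto} its spectrum is the circle $\{|\lambda|=\mu^{-(\alpha+2)/2}\}$, which does not contain the origin.

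Because $0\notin\sigma(C_{\tau})$, the spectral mapping property for inversion applies directly: for any invertible bounded operator $T$ one has $\sigma(T^{-1})=\{\lambda^{-1}:\lambda\in\sigma(T)\}$. Applying this with $T=C_{\tau}$ and substituting the circle from Theorem \ref{sphypauto} yields
$$
\sigma\big(C_{\tau^{-1}};\mathcal{A}_{\alpha}^2(\Pi^+)\big)=\big\{\lambda^{-1}:|\lambda|=\mu^{-(\alpha+2)/2}\big\}=\big\{\lambda\in\mathbb{C}:|\lambda|=\mu^{(\alpha+2)/2}\big\},
$$
which is the desired circle of radius $\mu^{(\alpha+2)/2}$.

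For the essential spectrum I would run the same argument one level up, inside the Calkin algebra $\mathcal{L}(\mathcal{H})/\mathcal{K}(\mathcal{H})$ with $\mathcal{H}=\mathcal{A}_{\alpha}^2(\Pi^+)$. Since $C_{\tau}$ is invertible, its image $\pi(C_{\tau})$ under the quotient map $\pi$ is invertible in the Calkin algebra with inverse $\pi(C_{\tau}^{-1})=\pi(C_{\tau^{-1}})$. The identity $\sigma_{\mathcal{B}}(a^{-1})=\{\lambda^{-1}:\lambda\in\sigma_{\mathcal{B}}(a)\}$ holds for the spectrum in any unital Banach algebra $\mathcal{B}$, and $\sigma_{ess}(\,\cdot\,)$ is precisely the spectrum of $\pi(\,\cdot\,)$ in the Calkin algebra. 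Hence
$$
\sigma_{ess}\big(C_{\tau^{-1}}\big)=\big\{\lambda^{-1}:\lambda\in\sigma_{ess}(C_{\tau})\big\}=\big\{\lambda\in\mathbb{C}:|\lambda|=\mu^{(\alpha+2)/2}\big\},
$$
using the equality $\sigma_{ess}(C_{\tau})=\{|\lambda|=\mu^{-(\alpha+2)/2}\}$ from Theorem \ref{sphypauto}. This gives $\sigma=\sigma_{ess}$ for $C_{\tau^{-1}}$ as well.

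There is essentially no hard step here; the only points requiring a little care are verifying the composition identity $C_{\tau^{-1}}=C_{\tau}^{-1}$ with the correct order of composition, and invoking the spectral mapping under inversion once for the genuine spectrum and once in the Calkin algebra for the essential spectrum. Both rest on the fact, already established in Theorem \ref{sphypauto}, that the relevant spectra avoid $0$.
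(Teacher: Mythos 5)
Your proof is correct and follows essentially the same route as the paper: the paper's own argument is exactly the identity $C_{\tau^{-1}}=C_{\tau}^{-1}$ combined with the spectral mapping under inversion, applied both to the spectrum and (implicitly, via the Calkin algebra) to the essential spectrum. Your version merely spells out the details --- the composition identity, the invertibility of $C_{\tau}$ since $0\notin\sigma(C_{\tau})$, and the Calkin-algebra formulation of $\sigma_{ess}$ --- which the paper leaves as a one-line remark.
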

\begin{proof}
Since $C_{\tau^{-1}}= C_{\tau}^{-1}$, we have that $\lambda \in \sigma \big( C_{\tau^{-1}} ; \mathcal{A}_{\alpha}^2 ( \Pi^+)\big)$ if and only if $ \lambda^{-1}\in \sigma \big( C_{\tau} ; \mathcal{A}_{\alpha}^2 ( \Pi^+)\big)$, where $\tau (w)=\mu w$. The same is true also for the essential spectrum and so the result follows from Theorem \ref{sphypauto}.
\end{proof}

\vspace{5pt}

Let us consider then the cases where $\tau$ is a hyperbolic self-map of $\Pi^+$ and $\tau (\Pi^+)\subsetneq \Pi^+$. Let $\mu \in (0,1)$ so that $ \mu^{-1}\in (1,\infty)$, and consider first the mapping 
$$\tau (w)= \frac{w}{\mu} +w_0 \textnormal{, where } \textnormal{Im}\, w_0 >0.$$ The attractive fixed point of $\tau$ is $\infty$ and the repulsive fixed point will necessarily have negative imaginary part (just solve $\tau (w)=w$ to see this). Moreover, we can write
$$ \tau_1 = g_1^{-1}\circ \tau \circ g_1, $$
where 
$$\tau_1 (w)= \frac{w}{\mu} + \frac{i(1-\mu )}{\mu} $$
and $g_1(w) = \mu (1-\mu )^{-1} \big( (\textnormal{Im}\, w_0 )w - \textnormal{Re}\, w_0 \big)$. Since $\frac{\mu \textnormal{Im}\, w_0}{1-\mu} >0$ and $\frac{ \mu  \textnormal{Re}\, w_0}{\mu -1}\in \mathbb{R}$, $g_1$ is an automorphism of $ \Pi^+$ fixing $\infty$ and, therefore, induces a bounded invertible composition operator $ C_{g_1}$ on $\mathcal{A}_{\alpha}^2 ( \Pi^+)$ for all $\alpha \geq -1$. It follows that $ C_{\tau}$ and $C_{\tau_1}$ acting on $\mathcal{A}_{\alpha}^2 ( \Pi^+)$ for $\alpha \geq -1$ are similar operators and in order to compute the spectrum for $C_{\tau}$ it is enough to compute the spectrum for $C_{\tau_1}$. In the proof below, recall our convention $\mathcal{A}_{-1}^2 (\mathbb{D})$ for $H^2 (\mathbb{D})$.

\vspace{5pt}

\begin{thm}\label{hyptauyks}
Let $\mu \in (0,1)$ and $\tau_1$ be a hyperbolic self-map of $\Pi^+$ of the form $\tau_1 (w)= \frac{w}{\mu} + \frac{i(1-\mu )}{\mu} $. Then, for all $\alpha \geq -1$, the spectrum and the essential spectrum are
$$ \sigma \big(C_{\tau_1} ; \mathcal{A}_{\alpha}^2 ( \Pi^+)\big) = \sigma_{ess} \big(C_{\tau_1} ; \mathcal{A}_{\alpha}^2 ( \Pi^+)\big) =\{\lambda  \in \mathbb{C} : \vert \lambda\vert \leq \mu^{(\alpha +2)/2}\}.$$
\end{thm}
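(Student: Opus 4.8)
The plan is to prove the two inclusions separately, mirroring the structure of Theorem \ref{sphypauto}. For the easy inclusion $\sigma(C_{\tau_1}) \subseteq \{\lambda \in \mathbb{C} : |\lambda| \leq \mu^{(\alpha+2)/2}\}$ I would simply invoke the spectral radius formula \eqref{sprad}. The attractive fixed point of $\tau_1$ is $\infty$ and the leading coefficient of $\tau_1$ is $1/\mu$, so $\tau_1'(\infty) = \mu$, and hence $\rho(C_{\tau_1}) = (\tau_1'(\infty))^{(\alpha+2)/2} = \mu^{(\alpha+2)/2}$. This confines the whole spectrum to the closed disc of that radius, and since $\sigma_{ess} \subseteq \sigma$, the same bound holds for the essential spectrum.

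The reverse inclusion is where the real work lies. The key observation is that the repulsive fixed point of $\tau_1$ is $q = -i$, and a direct computation gives $(w+i)\circ \tau_1 = (w+i)/\mu$; that is, the (non-automorphic) translation $w \mapsto w+i$ linearizes $\tau_1$ about $q$. Consequently, for every $s \in \mathbb{C}$ the power $F_s(w) := (w+i)^s$, taken with the principal branch (which is analytic on $\Pi^+$ since $\mathrm{Im}(w+i) = y+1 > 0$), satisfies
$$
C_{\tau_1} F_s = \big((w+i)/\mu\big)^s = \mu^{-s} F_s,
$$
so each $F_s$ is a formal eigenfunction with eigenvalue $\mu^{-s}$ (the splitting of the power is legitimate because $\mu > 0$ does not change the argument). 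The first genuine task is then to decide for which $s$ the function $F_s$ actually belongs to $\mathcal{A}_{\alpha}^2(\Pi^+)$.

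I expect this membership computation to be the main obstacle, though it is routine. Writing $|F_s(w)|^2 = |w+i|^{2\mathrm{Re}\,s}\, e^{-2\mathrm{Im}(s)\,\arg(w+i)}$ and noting that $\arg(w+i) \in (0,\pi)$ makes the exponential factor bounded above and below, the integrability is governed by $|w+i|^{2\mathrm{Re}\,s}$. For $\alpha > -1$ one integrates $(x^2+(y+1)^2)^{\mathrm{Re}\,s}\,y^{\alpha}$ over $\Pi^+$ (first in $x$, then in $y$), and for $\alpha = -1$ one checks the Hardy supremum directly; both yield the same threshold $F_s \in \mathcal{A}_{\alpha}^2(\Pi^+) \iff \mathrm{Re}\,s < -(\alpha+2)/2$. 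As $s$ ranges over this half-plane, the eigenvalue $\mu^{-s}$ sweeps out exactly the punctured open disc $\{0 < |\lambda| < \mu^{(\alpha+2)/2}\}$. Moreover, for a fixed such $\lambda$ the real part $\mathrm{Re}\,s = -\log_{\mu}|\lambda|$ is pinned down (and lies below $-(\alpha+2)/2$), while the admissible $s$ differ by the purely imaginary periods $2\pi i k/\ln\mu$; this produces infinitely many linearly independent eigenfunctions $F_s$, so each such $\lambda$ is an eigenvalue of \emph{infinite} multiplicity and therefore lies in $\sigma_{ess}(C_{\tau_1})$. Since the essential spectrum is closed, it contains the closure of the punctured disc, namely the full closed disc $\{|\lambda| \leq \mu^{(\alpha+2)/2}\}$. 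Combining this with $\sigma_{ess} \subseteq \sigma$ and the spectral-radius bound from the first paragraph forces all three sets to coincide with the closed disc, which is the claim.
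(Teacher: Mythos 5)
Your proposal is correct, but it reaches the hard inclusion by a genuinely different route than the paper. The paper transfers the problem to the unit disc: using the isometric isomorphism $J$ of \eqref{jii} it shows that $C_{\varphi_1}$ on $\mathcal{A}_{\alpha}^2(\mathbb{D})$, with $\varphi_1(z)=\mu z+1-\mu = h^{-1}\circ\tau_1\circ h$, is similar to $\mu^{-(\alpha+2)}C_{\tau_1}$ (the key point being that the cocycle $\bigl(\tau_1(w)+i\bigr)/(w+i)$ is identically $\mu^{-1}$), and then quotes Hurst's Theorem~8 to conclude that every point of the open disc of radius $\mu^{(\alpha+2)/2}$ is an eigenvalue of infinite multiplicity for $C_{\tau_1}$. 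You instead work entirely on the half-plane: the identity $\tau_1(w)+i=(w+i)/\mu$ (which is exactly the same algebraic fact that makes the paper's cocycle constant) shows that $F_s(w)=(w+i)^s$ is an eigenfunction with eigenvalue $\mu^{-s}$, and you determine directly for which $s$ it lies in $\mathcal{A}_{\alpha}^2(\Pi^+)$. Your threshold $\mathrm{Re}\,s<-(\alpha+2)/2$ is right in both regimes (for $\alpha>-1$ the $x$-integral contributes $(y+1)^{2\mathrm{Re}\,s+1}$, and the $y$-integral then forces $2\mathrm{Re}\,s+1+\alpha<-1$; for $\alpha=-1$ the supremum is attained as $y\to 0$ and gives the same bound), your handling of the branch and of $\arg(w+i)\in(0,\pi)$ is sound, and the $2\pi i k/\ln\mu$ periodicity correctly yields infinite multiplicity. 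In fact your eigenfunctions are precisely the functions $Jf_{p,q}$ that the paper records at the end of its proof (with $s=-(\alpha+2)-p-iq$, so that $p>-(\alpha+2)/2$ matches your condition on $\mathrm{Re}\,s$), so the two arguments exhibit the same eigenspaces. What each approach buys: yours is self-contained and avoids any appeal to Hurst or to the transference machinery, at the cost of carrying out the membership computation explicitly (in a final write-up you should execute, not merely sketch, that integral, since it is the only analytic content of the proof); the paper's route gets membership and infinite multiplicity for free from the disc literature and, as a byproduct, records the structural similarity $C_{\varphi_1}\sim\mu^{-(\alpha+2)}C_{\tau_1}$, which illuminates the adjoint identity exploited in Theorem \ref{hyptaukaks}. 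One small point of care: your statement $\tau_1'(\infty)=\mu$ is correct under the paper's convention $\tau'(\infty)=\lim_{w\to\infty}w/\tau(w)$, i.e.\ the reciprocal of the leading coefficient, so your spectral-radius bound via \eqref{sprad} is legitimate.
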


\begin{proof} 
The fixed points of $\tau_1$ are $-i$ and $\infty$ (the attractive one). By \eqref{sprad} and \eqref{attrepder}, for $\alpha \geq -1$,
$$\rho \big(  C_{\tau_1};   \mathcal{A}_{\alpha}^2 ( \Pi^+) \big) = \big(\tau_1' (\infty)\big)^{(\alpha +2)/ 2}=\big(\tau_1' (-i)\big)^{-(\alpha +2)/ 2}  = \mu^{(\alpha +2)/ 2},$$
so that
$$\sigma \big(C_{\tau_1}; \mathcal{A}_{\alpha}^2 ( \Pi^+) \big)\subseteq \{\lambda  \in \mathbb{C}: \vert \lambda\vert \leq \mu^{(\alpha +2)/2}\}.$$
We will show that each point in the open disc $\{\lambda  \in \mathbb{C} : \vert \lambda\vert < \mu^{(\alpha +2)/2}\}$ is an eigenvalue of infinite multiplicity for $C_{\tau_1}$ on $\mathcal{A}_{\alpha}^2 ( \Pi^+)  $ for $\alpha \geq -1 $. Since the eigenvalues of infinite multiplicity belong to the essential spectrum, which is always a closed set, the claim will then follow.

We will take advantage of the known spectrum in the corresponding case of the unit disc. Namely, by Hurst \cite[Theorem 8]{Hu}, we know that the spectrum of $C_{\varphi_1}$, where $\varphi_1 (z)=\mu z +1 -\mu$, acting on $\mathcal{A}_{\alpha}^2 (\mathbb{D})$ for $\alpha \geq -1$, is the closed disc with radius $\varphi_1' (1)^{-(\alpha  +2)/2} = \mu^{-(\alpha  +2)/2}$. Moreover, each point $\lambda $ satisfying $ \vert \lambda \vert <  \mu^{-(\alpha  +2)/2}$ is an eigenvalue of infinite multiplicity for $C_{\varphi_1} : \mathcal{A}_{\alpha}^2 (\mathbb{D})\longrightarrow \mathcal{A}_{\alpha}^2 (\mathbb{D})$.

By a simple computation, we see that $\varphi_1 = h^{-1} \circ \tau_1 \circ h$, where $h: \mathbb{D} \longrightarrow \Pi^+$ is the bijection defined in \eqref{hoo}. Using the isometric isomorphism $J: \mathcal{A}_{\alpha}^2 (\mathbb{D})\longrightarrow \mathcal{A}_{\alpha}^2 ( \Pi^+)$ (see \eqref{jii}) we get that
\begin{equation*}
\begin{split}
J(f\circ \varphi_1)(w)& = (f\circ \varphi_1 \circ h^{-1})(w)\frac{2^{\alpha +1}}{(w+i)^{\alpha +2}} = (f\circ h^{-1} \circ \tau_1 )(w)\frac{2^{\alpha +1}}{(\tau_1 (w)+i)^{\alpha +2}}\Big(\frac{\tau_1 (w)+i}{w+i}\Big)^{\alpha +2} \\
&= \Big(\frac{\tau_1 (w)+i}{w+i}\Big)^{\alpha +2} \big(C_{\tau_1} (Jf)\big) (w)  .
\end{split}
\end{equation*}
On the other hand, observe that
$$ \frac{\tau_1 (w)+i}{w+i} = \frac{\frac{w}{\mu} + \frac{i(1-\mu )}{\mu}+i}{w+i}= \mu^{-1}\frac{w+ i -i\mu +i\mu}{w+i}=  \mu^{-1},$$
so that
$$\big( J(C_{ \varphi_1}f)\big) (w)= \mu^{-(\alpha +2)}  \big(C_{\tau_1} (Jf)\big) (w).$$

That is, $C_{\varphi_1}$ on $ \mathcal{A}_{\alpha}^2 (\mathbb{D})$ is similar to the operator $\mu^{-(\alpha +2)}C_{\tau_1}$ acting on $\mathcal{A}_{\alpha}^2 ( \Pi^+) $. 

\vspace{5pt}

Recall that, by \cite[Theorem 8]{Hu}, for any $\lambda$ with $ \vert\lambda\vert <  \mu^{-(\alpha  +2)/2} $ there exists $f_{\lambda}\in \mathcal{A}_{\alpha}^2 (\mathbb{D})$ (in fact, an $\infty$-dimensional subspace of them) such that $C_{\varphi_1} f_{\lambda} = \lambda f_{\lambda}$. But now, writing $F_{\lambda} =Jf_{\lambda}$, we have that
$$ \mu^{-(\alpha +2)} C_{\tau_1}F_{\lambda}= \lambda F_{\lambda} $$
which is equivalent to
$$  C_{\tau_1}F_{\lambda} = \mu^{\alpha +2} \lambda F_{\lambda} .$$
Since $ \vert\lambda\vert <  \mu^{-(\alpha  +2)/2} $, we obtain that $C_{\tau_1}$ has an eigenfunction for each point in the open disc of radius $\mu^{(\alpha +2)/ 2}  $. Moreover, the eigenvalues are of infinite multiplicity since $J$ preserves the linear independence of functions. 

\vspace{5pt}

In fact, it is possible to compute the eigenfunctions in $ \mathcal{A}_{\alpha}^2 ( \Pi^+)$ explicitly from the corresponding functions in the unit disc setting: In the unit disc setting (see the proof of Theorem $8$ in \cite{Hu}), for any $p > -(\alpha +2)/2 $ and $q\in \mathbb{R}$, the function $f_{p,q} \in \mathcal{A}_{\alpha}^2 (\mathbb{D})$,
$$f_{p,q} (z)= \exp \{ (p+iq) \log (1-z)\} = (1-z)^{p+iq},$$
where $\log (\cdot )$ is the principal branch of the natural logarithm, is an eigenvector for $C_{\varphi_1}$ corresponding to the eigenvalue $\lambda = \mu^{p+iq}$. Using the isometric isomorphism we get that
$$(Jf_{p,q})(w) = \Big(1-\frac{w-i}{w+i}\Big)^{p+iq} \frac{c_{\alpha}}{ (w+i)^{\alpha +2}}= \frac{(2i)^{p+iq}c_{\alpha}}{(w+i)^{\alpha +2 + p+iq }}
$$
is an eigenvector for $C_{\tau_1}$ acting on $\mathcal{A}_{\alpha}^2 ( \Pi^+)$ corresponding to the eigenvalue $\mu^{\alpha + 2 + p+iq}$.

\end{proof}

\vspace{5pt}

Consider next the mapping $\tau  (w)= \mu w +w_0$, where $\mu \in (0,1)$ and $\textnormal{Im}\, w_0 >0$. Here, the repulsive fixed point of $\tau$ is $\infty$ and the attractive fixed point belongs to $\Pi^+$. In this case, to compute the spectrum for $C_{\tau}$, it is enough to consider the mapping $ \tau_2 : \Pi^+ \longrightarrow \Pi^+$ of the form
$$ \tau_2 (w)= \mu w+ i (1-\mu),$$
having $i$ as its attractive fixed point. This is due to the fact that we can write
$$\tau_2 = g_2^{-1}\circ \tau \circ g_2 , $$
where $g_2(w)= (1-\mu)^{-1} \big((\textnormal{Im}\, w_0)w + \textnormal{Re}\, w_0 \big) $ is an automorphism of $\Pi^+$ fixing $\infty$, so that $C_{g_2}$ is bounded and invertible on $ \mathcal{A}_{\alpha}^2 ( \Pi^+)$ for all $\alpha \geq -1$. From this it follows that
$$C_{\tau_2} = C_{g_2}C_{\tau}C_{g_2}^{-1}.$$
Therefore, for any $\alpha \geq -1$, we have that $\sigma \big(C_{\tau_2} ; \mathcal{A}_{\alpha}^2 ( \Pi^+) \big) =\sigma \big(C_{\tau} ; \mathcal{A}_{\alpha}^2 ( \Pi^+) \big)$ and that the essential spectra of $C_{\tau_2}$ and $C_{\tau}$ on $\mathcal{A}_{\alpha}^2 ( \Pi^+)$ coincide. The spectrum and the essential spectrum for $C_{\tau_2}  $ are determined in the following result.

\vspace{5pt}

\begin{thm}\label{hyptaukaks}
Let $\mu \in (0,1)$ and $\tau_2$ be a hyperbolic self-map of $\Pi^+$ of the form $\tau_2 (w)=\mu w+ i (1-\mu) $. Then, for all $\alpha \geq -1$, the spectrum and the essential spectrum of $C_{\tau_2}$ on $\mathcal{A}_{\alpha}^2 ( \Pi^+) $ are
$$ \sigma \big(C_{\tau_2} ; \mathcal{A}_{\alpha}^2 ( \Pi^+) \big) = \sigma_{ess} \big(C_{\tau_2} ; \mathcal{A}_{\alpha}^2 ( \Pi^+) \big) =\{\lambda  \in \mathbb{C} : \vert \lambda\vert \leq \mu^{-(\alpha +2)/2}\}.$$

\end{thm}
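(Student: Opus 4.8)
The plan is to reduce $C_{\tau_2}$ to the operator $C_{\tau_1}$ of Theorem \ref{hyptauyks} by passing to the Hilbert-space adjoint, exploiting the Paley--Wiener picture that has already handled the automorphism case. First I would transport $C_{\tau_2}$ to the model space $L_{\alpha +1}^2$: writing $F=\mathcal{F}^{-1}f$ and inserting $\tau_2(w)=\mu w+i(1-\mu)$ into $F(\tau_2(w))=\int_0^{\infty}f(t)e^{i\tau_2(w)t}\,dt$, the substitution $s=\mu t$ shows that, under the Paley--Wiener isometry, $C_{\tau_2}$ is unitarily equivalent to
\[
\widehat{C}_{\tau_2}f(s)=\frac{1}{\mu}\,e^{-(1-\mu)s/\mu}f(s/\mu),\qquad s>0 .
\]
The identical computation applied to $\tau_1(w)=w/\mu+i(1-\mu)/\mu$, this time with $s=t/\mu$, gives $\widehat{C}_{\tau_1}f(s)=\mu\, e^{-(1-\mu)s}f(\mu s)$.

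Next I would compute the adjoint of $\widehat{C}_{\tau_2}$ on $L_{\alpha +1}^2$. Unwinding the weighted inner product $\langle f,g\rangle=b_{\alpha}\int_0^{\infty}f\overline{g}\,t^{-(\alpha+1)}\,dt$ and changing variables $u=s/\mu$ (so that the weight contributes a factor $\mu^{-(\alpha+1)}$, which combines with the $\mu$ from $ds$ and the $1/\mu$ from the operator), one finds
\[
\widehat{C}_{\tau_2}^{\,*}g(u)=\mu^{-(\alpha+1)}e^{-(1-\mu)u}g(\mu u).
\]
Comparing with the formula for $\widehat{C}_{\tau_1}$ gives the key identity $\widehat{C}_{\tau_2}^{\,*}=\mu^{-(\alpha+2)}\widehat{C}_{\tau_1}$, valid for every $\alpha\geq -1$. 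Because the Paley--Wiener transform is an isometric isomorphism, adjoints correspond, and this transfers back to the clean relation
\[
C_{\tau_2}^{\,*}=\mu^{-(\alpha+2)}\,C_{\tau_1}\qquad\text{on }\mathcal{A}_{\alpha}^2(\Pi^+).
\]

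With this identity in hand the theorem follows from general spectral theory together with Theorem \ref{hyptauyks}. Using $\sigma(T^*)=\overline{\sigma(T)}$ and $\sigma_{ess}(T^*)=\overline{\sigma_{ess}(T)}$ (Fredholmness is preserved under taking adjoints), together with the homogeneity $\sigma(cT)=c\,\sigma(T)$ for the positive scalar $c=\mu^{-(\alpha+2)}$, I obtain
\[
\sigma\big(C_{\tau_2}\big)=\overline{\mu^{-(\alpha+2)}\,\sigma\big(C_{\tau_1}\big)}=\mu^{-(\alpha+2)}\{\lambda:|\lambda|\le\mu^{(\alpha+2)/2}\}=\{\lambda:|\lambda|\le\mu^{-(\alpha+2)/2}\},
\]
the disc being invariant under complex conjugation, and the same computation verbatim for $\sigma_{ess}$. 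As a consistency check, the inclusion $\sigma(C_{\tau_2})\subseteq\{|\lambda|\le\mu^{-(\alpha+2)/2}\}$ is also immediate from the spectral radius formula \eqref{sprad}, since $\tau_2'(\infty)=1/\mu$ by \eqref{attrepder}.

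The main obstacle is the adjoint computation itself: one has to track the weight $t^{-(\alpha+1)}$ carefully through the change of variables, and separately confirm the borderline Hardy case $\alpha=-1$, where the normalisation of the $L_{\alpha+1}^2$-norm differs (there the weight is absent and a direct check gives the same relation $\widehat{C}_{\tau_2}^{\,*}=\mu^{-1}\widehat{C}_{\tau_1}$). Everything else is bookkeeping. As an aside, one could bypass Theorem \ref{hyptauyks} entirely: the functions $g_b(u)=u^{b}e^{-u}$ with $\textnormal{Re}\, b>\alpha/2$ are eigenvectors of $\widehat{C}_{\tau_2}^{\,*}$ with eigenvalue $\mu^{\,b-(\alpha+1)}$, and as $b$ varies these eigenvalues, each of infinite multiplicity, fill the open disc of radius $\mu^{-(\alpha+2)/2}$, placing that disc inside $\sigma_{ess}(C_{\tau_2})$; but routing through the adjoint identity and the already-proved Theorem \ref{hyptauyks} is shorter.
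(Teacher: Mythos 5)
Your proof is correct, and its architecture is the same as the paper's: everything hinges on the identity $C_{\tau_2}^{*}=\mu^{-(\alpha+2)}C_{\tau_1}$ together with Theorem \ref{hyptauyks}, plus the standard facts that adjoints conjugate spectra and preserve Fredholmness. The only genuine divergence is how the identity is established. The paper stays on the half-plane and computes with reproducing kernels, showing $C_{\tau_2}K_{z}^{\alpha}=\mu^{-(\alpha+2)}K_{\tau_1(z)}^{\alpha}$ and unwinding $\langle C_{\tau_2}^{*}F,K_{z}^{\alpha}\rangle$; you instead pass through Paley--Wiener and compute the adjoint of the weighted dilation $\widehat{C}_{\tau_2}f(s)=\mu^{-1}e^{-(1-\mu)s/\mu}f(s/\mu)$ against the weight $t^{-(\alpha+1)}$. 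Your bookkeeping checks out (including the borderline case $\alpha=-1$, where the general formula $\mu^{-(\alpha+1)}e^{-(1-\mu)u}g(\mu u)$ already specializes correctly, so no separate verification is really needed), and the two computations buy the same thing at comparable cost; the kernel route is arguably more intrinsic, while yours makes the multiplication--dilation structure visible. Your closing aside is a nice bonus: the eigenfunctions $g_b(u)=u^{b}e^{-u}$, $\textnormal{Re}\, b>\alpha/2$, of $\widehat{C}_{\tau_2}^{*}$ with eigenvalues $\mu^{\,b-(\alpha+1)}$ are precisely the Fourier-side images of the functions $(w+i)^{-(\alpha+2+p+iq)}$ (with $b=\alpha+1+p+iq$) that the paper exhibits in its proof of Theorem \ref{hyptauyks}, so this variant makes the theorem self-contained; the only point to flag there is that the eigenvalues omit $0$, which enters only via the closedness of the essential spectrum --- exactly as in the paper's own argument.
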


\begin{proof}
The mapping $\tau_2$ has $i$ (attractive) and $\infty$ as its fixed points. Again, by \eqref{sprad}, we know that the spectral radius of $C_{\tau_2}$ is $\big(\tau_2' (\infty)\big)^{(\alpha +2)/2}= \big(\tau_2' (i)\big)^{-(\alpha +2)/2} =\mu^{- (\alpha +2)/2}$ so that
$$\sigma \big(C_{\tau_2}; \mathcal{A}_{\alpha}^2 ( \Pi^+) \big)\subseteq \{\lambda  \in \mathbb{C} : \vert \lambda\vert \leq \mu^{-(\alpha +2)/2}\}.$$
To prove the reverse containment (and that the essential spectrum coincides with the spectrum) it is enough to show that each point in the disc $\{\lambda  \in \mathbb{C} : \vert \lambda\vert < \mu^{-(\alpha +2)/2}\}$ is an eigenvalue (of infinite multiplicity) for the adjoint  $C_{\tau_2}^{*}$. From this it follows that 
$$\{\lambda  \in \mathbb{C} : \vert \lambda\vert < \mu^{-(\alpha +2)/2}\} \subseteq \sigma \big(C_{\tau_2}; \mathcal{A}_{\alpha}^2 ( \Pi^+) \big) $$ 
since $\lambda \in \sigma  (C_{\tau_2})$ if and only if $\overline{\lambda } \in \sigma  ( C_{\tau_2}^{*}) $. Since the eigenvalues of infinite multiplicity belong to the essential spectrum and an operator is a Fredholm operator if and only if its adjoint is (see \cite[Thm. 16.4.]{Mu}, for instance), we will also get that
$$ \sigma_{ess} \big( C_{\tau_2}  ; \mathcal{A}_{\alpha}^2 ( \Pi^+) \big) = \sigma_{ess}\big( C_{\tau_2}^{*}  ; \mathcal{A}_{\alpha}^2 ( \Pi^+) \big) =  \sigma \big( C_{\tau_2}^{*}  ; \mathcal{A}_{\alpha}^2 ( \Pi^+) \big) =  \sigma \big( C_{\tau_2}  ; \mathcal{A}_{\alpha}^2 ( \Pi^+) \big).
$$ 

Our proof relies on a useful observation, namely, that in this case, $C_{\tau_2}^{*} = \mu^{-(\alpha +2)} C_{\tau_1}$, where $\tau_1(w)= \mu^{-1} w + i\mu^{-1}(1-\mu )$ as in Theorem \ref{hyptauyks}. The claim follows then from Theorem \ref{hyptauyks}.

Recall first that the reproducing kernel $K_z^{\alpha}$ of $\mathcal{A}_{\alpha}^2 ( \Pi^+)$ at the point $z\in \Pi^{+}$ is of the form $K_z^{\alpha} (w)= \frac{k_{\alpha}}{(w-\bar{z})^{\alpha +2}}$, where the constant $k_{\alpha}$ depends only on $\alpha$. Let $F\in \mathcal{A}_{\alpha}^2 ( \Pi^+)$. Now
$$ C_{\tau_2}^{*} F (z) = \langle C_{\tau_2}^{*} F , K_z^{\alpha} \rangle_{\mathcal{A}_{\alpha}^2 ( \Pi^+)} = \langle  F , C_{\tau_2} K_z^{\alpha} \rangle_{\mathcal{A}_{\alpha}^2 ( \Pi^+)} ,$$
where 
\begin{equation*}
\begin{split}
C_{\tau_2} K_z^{\alpha} (w) &=  \frac{k_{\alpha}}{(\tau_2 (w)-\bar{z})^{\alpha +2}} 
= \frac{k_{\alpha}}{(\mu w + i(1-\mu )-\bar{z})^{\alpha +2}} \\
& = \mu^{-(\alpha +2)} \frac{k_{\alpha}}{\Big( w - \overline{\big( \frac{z}{\mu} +\frac{i(1-\mu )}{\mu}\big)}\Big)^{\alpha +2}} = \mu^{-(\alpha +2)} K_{\tau_1 (z)}^{\alpha} (w).
\end{split}
\end{equation*}
Here, $\tau_1 $ is as in Theorem \ref{hyptauyks}. It follows that for all $z\in \Pi^{+} $
\begin{equation*}
\begin{split}
C_{\tau_2}^{*} F (z)  &=  \langle  F , C_{\tau_2} K_z^{\alpha} \rangle_{\mathcal{A}_{\alpha}^2 ( \Pi^+)} = \mu^{-(\alpha +2)} \langle  F , K_{\tau_1 (z)}^{\alpha}  \rangle_{\mathcal{A}_{\alpha}^2 ( \Pi^+)} \\
& = \mu^{-(\alpha +2)} F\big(\tau_1 (z) \big) =  \mu^{-(\alpha +2)} C_{\tau_1}F(z). 
\end{split}
\end{equation*}

\end{proof}

\vspace{5pt}

\begin{rem}\label{notsimi} Note that in the hyperbolic automorphism case $\sigma \big(C_{\tau}; \mathcal{A}_{\alpha}^2 ( \Pi^+)\big) \neq \sigma \big(C_{\varphi}; \mathcal{A}_{\alpha}^2 ( \mathbb{D})\big)$ for $\alpha \geq -1$, where $ \varphi = g^{-1}\circ \tau \circ g$ for any analytic bijection $g: \mathbb{D} \longrightarrow \Pi^+$. From this we can deduce that there does not exist a composition operator $C_g$ that would give an isomorphism between the spaces $\mathcal{A}_{\alpha}^2 ( \Pi^+) $ and $\mathcal{A}_{\alpha}^2 (\mathbb{D})$. The fact that in the non-automorphism case the operators $C_{\tau_1}$ and $C_{\tau_2}$ are (up to a multiplication by a scalar) adjoints of each other is a phenomenon that does not occur in the unit disc setting.

\end{rem}

\section{The spectrum of $C_{\tau}$ on $\mathcal{D}_{\alpha}^2 (\Pi^+) $ for $\alpha > -1$}\label{secdir}

From the spectral results for the parabolic and the hyperbolic composition operators in the Hardy space and the weighted Bergman spaces of the upper half-plane we will get the (essential) spectra in the weighted Dirichlet spaces of the upper half-plane in a straightforward manner. 

\vspace{5pt}

We will use the definition given in \cite[Section 6]{DGM} for the weighted Dirichlet spaces on $\Pi^+$: For all $\alpha >-1$, the space $\mathcal{D}_{\alpha}^2 (\Pi^+) $ consists of the analytic functions $ F: \Pi^+ \longrightarrow \mathbb{C}$ such that $F' \in \mathcal{A}_{\alpha}^2 ( \Pi^+) $, i.e.
\begin{equation}\label{intdir} \int_{\Pi^+} \vert F' (x+iy)\vert^2 y^{\alpha} \, dx \, dy < \infty ,
\end{equation}
together with the condition that for all $x\in \mathbb{R}$
$$ F(x+iy) \longrightarrow 0, \textnormal{ when } y \longrightarrow \infty .$$
The spaces $ \mathcal{D}_{\alpha}^2  (\Pi^+) $ for $\alpha >-1$ defined in this way actually consists of the unique members of each equivalence class when identifying the functions satisfying \eqref{intdir} that differ by a constant. This being the case, we put $ \Vert F\Vert_{\mathcal{D}_{\alpha}^2 (\Pi^+) } = \Big(\int_{\Pi^+} \vert F' (x+iy)\vert^2 y^{\alpha} \, dx \, dy \Big)^{1/2}$.

\vspace{5pt}

Recall that in the classical Dirichlet space setting ($\alpha =0$) the spectra of \textit{all} linear fractional composition operators $C_{\varphi}$ and $C_{\tau}$ coincide whenever $\tau = g \circ \varphi \circ g^{-1}$ for some conformal map $g: \mathbb{D} \longrightarrow \Pi^+$. (For the spectra of linear fractional composition operators acting on the Dirichlet space $\mathcal{D}^2 (\mathbb{D})$, see \cite{CM, Hu, Hi}.) This, and the boundedness of any linear fractional composition operator acting on the Dirichlet space of the upper half-plane $\mathcal{D}_{0}^2  (\Pi^+)$, follows from the fact that the composition operator $C_g : \mathcal{D}_0^2  (\Pi^+) \longrightarrow \mathcal{D}^2 (\mathbb{D})/ \mathbb{C}$ is unitary. 

\vspace{5pt}

There is also a version of the Paley-Wiener theorem for weighted Dirichlet spaces (see \cite[Thm. 3]{DGM}): Let $\alpha >-1$. Denote by $L^2_{\alpha -1} $ the space consisting of the measurable functions on $\mathbb{R}_{+}$ with finite norm
$$ \Vert f\Vert_{ L^2_{\alpha -1} }=\Big(\frac{\Gamma (\alpha +1)}{2^{\alpha}} \int_0^{\infty} \vert f (t)\vert^2 t^{1-\alpha} \, dt \Big)^{1/2}.$$ 
The Paley-Wiener theorem states that $F\in\mathcal{D}_{\alpha}^2$ if and only if $F(w)=\int_0^{\infty} f(t)e^{iwt} \, dt$ for all $w\in \Pi^+$, where $f \in  L^2_{\alpha -1}$. Moreover, $ \Vert F\Vert_{\mathcal{D}_{\alpha}^2 }=  \Vert f\Vert_{L^2_{\alpha -1}} $.

\vspace{5pt}

By noting that $ \alpha -1 =( \alpha -2) +1$, the above version of Paley-Wiener theorem gives a justification for writing $ \mathcal{D}_{\alpha}^2 (\Pi^+)  = \mathcal{A}_{\alpha -2}^2  (\Pi^+) $. This point of view is also supported by the spectral results below. Note that we consider only those linear fractional composition operators which are bounded also on $  \mathcal{A}_{\alpha}^2 ( \Pi^+)$ for $\alpha > -1$. In fact, as far as we know, it is not known whether there are also other linear fractional transformations that induce bounded composition operators on $ \mathcal{D}_{\alpha}^2 (\Pi^+)$ for $ \alpha \in (-1,0) $ or $\alpha \in (0, 1) $ as there are in the unweighted Dirichlet space.

\begin{lemma}\label{lemmadirsim} Let $\tau : \Pi^+ \longrightarrow \Pi^+$ be a hyperbolic or parabolic map of the form $\tau (w)=\mu w +w_0$, where $\mu >0$ and $\textnormal{Im}\, w_0\geq 0$ (and if $\mu =1$, then $w_0\neq 0$). Then, for all $\alpha >-1$, the operators $ C_{\tau}: \mathcal{D}_{\alpha}^2  (\Pi^+) \longrightarrow \mathcal{D}_{\alpha}^2  (\Pi^+) $ and $ \mu C_{\tau}: \mathcal{A}_{\alpha}^2 ( \Pi^+) \longrightarrow\mathcal{A}_{\alpha}^2 ( \Pi^+) $ are similar.
\end{lemma}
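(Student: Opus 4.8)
The plan is to pass to the Fourier transform side, where both $C_{\tau}$ on $\mathcal{A}_{\alpha}^2 ( \Pi^+)$ and $C_{\tau}$ on $\mathcal{D}_{\alpha}^2 (\Pi^+)$ become the \emph{same} explicit operator acting on two different weighted $L^2$ spaces on $\mathbb{R}_+$, and then to conjugate by a multiplication operator that converts one weight into the other. First I would record the action of $C_{\tau}$ under the Fourier transform. If $F(w)=\int_0^{\infty} f(t)e^{iwt}\, dt$, then substituting $s=\mu t$ gives
\[
(C_{\tau}F)(w)=\int_0^{\infty} f(t)e^{i(\mu w + w_0)t}\, dt=\int_0^{\infty} \frac{1}{\mu}\, e^{iw_0 s/\mu}f(s/\mu)\, e^{iws}\, ds,
\]
so that on the transform side $C_{\tau}$ is represented by the operator $\widehat{C}_{\tau}$ given by
\[
(\widehat{C}_{\tau}f)(s)=\frac{1}{\mu}\, e^{iw_0 s/\mu}\, f(s/\mu), \quad s>0.
\]
By the two versions of the Paley--Wiener theorem quoted above, $\widehat{C}_{\tau}$ acting on $L_{\alpha +1}^2$ is unitarily equivalent to $C_{\tau}$ on $\mathcal{A}_{\alpha}^2 ( \Pi^+)$, while the identical formula acting on $L_{\alpha -1}^2$ is unitarily equivalent to $C_{\tau}$ on $\mathcal{D}_{\alpha}^2 (\Pi^+)$. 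Thus it suffices to compare $\widehat{C}_{\tau}$ on these two spaces.

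The decisive point is that $L_{\alpha -1}^2$ and $L_{\alpha +1}^2$ carry the weights $t^{1-\alpha}$ and $t^{-(\alpha +1)}$, whose ratio is exactly $t^2$. I would therefore introduce the multiplication operator $M_t\colon f\mapsto tf$ and verify, by a direct weight computation, that $M_t$ maps $L_{\alpha -1}^2$ isometrically (up to a normalising constant) onto $L_{\alpha +1}^2$, with bounded inverse $M_{1/t}$. Indeed $\int_0^{\infty}|tf(t)|^2 t^{-(\alpha +1)}\, dt=\int_0^{\infty}|f(t)|^2 t^{1-\alpha}\, dt$, and comparing the normalising constants $b_{\alpha}=2^{-\alpha}\pi\Gamma(\alpha +1)$ and $\Gamma(\alpha +1)/2^{\alpha}$ shows the two norms differ only by the fixed factor $\pi$. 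Hence $M_t$ is an isomorphism $L_{\alpha -1}^2\longrightarrow L_{\alpha +1}^2$.

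It then remains to compute the intertwining relation. A one-line calculation gives $\widehat{C}_{\tau}M_t f=\frac{1}{\mu}M_t\widehat{C}_{\tau}f$, because the dilation $f(\cdot)\mapsto f(\cdot/\mu)$ turns the factor $t$ produced by $M_t$ into $t/\mu$. Rearranging, $M_t^{-1}\widehat{C}_{\tau}M_t=\frac{1}{\mu}\widehat{C}_{\tau}$ on $L_{\alpha -1}^2$; equivalently, $\widehat{C}_{\tau}$ on $L_{\alpha -1}^2$ is similar to $\mu\widehat{C}_{\tau}$ on $L_{\alpha +1}^2$ via the isomorphism $M_t$. Since $M_t$ and $M_t^{-1}$ are bounded and $\widehat{C}_{\tau}$ is bounded on $L_{\alpha +1}^2$ (as $C_{\tau}$ is bounded on $\mathcal{A}_{\alpha}^2 ( \Pi^+)$), this also shows $\widehat{C}_{\tau}$ is bounded on $L_{\alpha -1}^2$, hence $C_{\tau}$ is bounded on $\mathcal{D}_{\alpha}^2 (\Pi^+)$.

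Transferring back through the Paley--Wiener identifications yields precisely that $C_{\tau}$ on $\mathcal{D}_{\alpha}^2 (\Pi^+)$ is similar to $\mu C_{\tau}$ on $\mathcal{A}_{\alpha}^2 ( \Pi^+)$, as claimed. The main obstacle is really just recognising that the single factor of $t$ relating the two Paley--Wiener weights is exactly what produces the scalar $\mu$ under conjugation; once the transform picture is set up, the remainder is a short weight computation together with the one-line intertwining identity.
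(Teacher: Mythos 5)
Your proof is correct. It is worth noting, though, that it is at bottom the same similarity as the paper's, viewed through the Fourier transform: the paper's proof is a two-line argument that conjugates directly by the differentiation operator $D\colon F\mapsto F'$, which is an isomorphism $\mathcal{D}_{\alpha}^2(\Pi^+)\longrightarrow \mathcal{A}_{\alpha}^2(\Pi^+)$ by the very definition of the Dirichlet norm, and then invokes the chain rule $(F\circ\tau)'=\tau'(F'\circ\tau)=\mu\,(F'\circ\tau)$ to get $DC_{\tau}=\mu C_{\tau}D$. Under the Paley--Wiener identification one has $\mathcal{F}D\mathcal{F}^{-1}=M_{it}$, so your intertwiner $M_t$ is (up to the harmless unimodular factor $i$, which does not affect similarity) precisely the transform-side incarnation of $D$. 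What your route costs is the extra machinery of the two Paley--Wiener theorems; what it buys is that the two points the paper declares ``clear'' become explicit computations: the surjectivity of the intertwiner is verified concretely ($M_{1/t}$ maps $L_{\alpha+1}^2$ into $L_{\alpha-1}^2$ by the weight comparison $t^{-(\alpha+1)}\cdot t^2=t^{1-\alpha}$), and the boundedness of $C_{\tau}$ on $\mathcal{D}_{\alpha}^2(\Pi^+)$ is derived cleanly from the factorization $\widehat{C}_{\tau}=M_{1/t}\,(\mu\widehat{C}_{\tau})\,M_t$ rather than asserted. Your bookkeeping of which space each copy of $\widehat{C}_{\tau}$ acts on in the rearrangement $M_t^{-1}\widehat{C}_{\tau}M_t=\frac{1}{\mu}\widehat{C}_{\tau}$ is also correct, and the normalising constants ($b_{\alpha}=2^{-\alpha}\pi\Gamma(\alpha+1)$ versus $2^{-\alpha}\Gamma(\alpha+1)$, ratio $\pi$) are immaterial for similarity, as you say.
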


\begin{proof}
It is clear from the definition of the weighted Dirichlet spaces that the differentiation operator $D: F \longmapsto F'$ is an isomorphism $\mathcal{D}_{\alpha}^2  (\Pi^+)  \longrightarrow \mathcal{A}_{\alpha}^2 ( \Pi^+) $. Recall that $C_{\tau} $ is bounded on $\mathcal{A}_{\alpha}^2 ( \Pi^+)  $ for all $\alpha >-1$. Since, moreover, $ (F\circ \tau)' = \tau' (F' \circ \tau)$ and $\tau'(w)=\mu$ for all $w\in\Pi^+$, the composition operator $C_{\tau}$ is bounded on $\mathcal{D}_{\alpha}^2  (\Pi^+) $ and similar to the operator
$$ \mu C_{\tau}:  \mathcal{A}_{\alpha}^2 ( \Pi^+)  \longrightarrow  \mathcal{A}_{\alpha}^2 ( \Pi^+) $$
for any $\alpha >-1$.

\end{proof}

\vspace{5pt}

The following theorems are obvious consequences of Lemma \ref{lemmadirsim} and the spectral results in Sections $3$ and $4$.

\begin{thm}\label{dirpara}
Let $\tau$ be a parabolic self-map of $\Pi^+ $, that is, $\tau (w)=w+w_0$, where $\textnormal{Im}\, w_0 \geq 0$ ($w_0 \neq 0$). Then, for all $\alpha >-1$
\vspace{5pt}

\begin{itemize}
\item[i)] $ \sigma \big( C_{\tau} ; \mathcal{D}_{\alpha}^2  (\Pi^+) \big)=\sigma_{ess} \big( C_{\tau} ; \mathcal{D}_{\alpha}^2  (\Pi^+) \big) = \mathbb{T}$, when $w_0\in \mathbb{R}$,

\vspace{5pt}

\item[ii)] $\sigma \big( C_{\tau} ; \mathcal{D}_{\alpha}^2  (\Pi^+) \big) = \{e^{iw_0t} : t\in [0,\infty ) \} \cup \{0\}$, when $w_0\in \Pi^+$.
\end{itemize}
Moreover, the essential spectrum coincides with the spectrum in both cases.
\end{thm}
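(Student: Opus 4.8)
The plan is to invoke Lemma~\ref{lemmadirsim}, which reduces everything to the already-proved parabolic case on the weighted Bergman spaces. By that lemma, for every $\alpha>-1$ the operator $C_{\tau}$ on $\mathcal{D}_{\alpha}^2(\Pi^+)$ is similar to $\mu C_{\tau}$ on $\mathcal{A}_{\alpha}^2(\Pi^+)$. In the parabolic case we have $\mu=1$, so the similarity is simply between $C_{\tau}$ on $\mathcal{D}_{\alpha}^2(\Pi^+)$ and $C_{\tau}$ on $\mathcal{A}_{\alpha}^2(\Pi^+)$. Since similar operators have the same spectrum, the same approximate point spectrum, and the same essential spectrum (as recalled in Section~\ref{prelim}), the result will follow immediately from Theorem~\ref{sppara}.

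Concretely, I would argue as follows. Fix $\alpha>-1$ and let $D:\mathcal{D}_{\alpha}^2(\Pi^+)\longrightarrow\mathcal{A}_{\alpha}^2(\Pi^+)$ be the differentiation isomorphism from the proof of Lemma~\ref{lemmadirsim}. Then
$$ D\, C_{\tau}\, D^{-1} = \mu\, C_{\tau} = C_{\tau} \quad\text{on } \mathcal{A}_{\alpha}^2(\Pi^+), $$
using $\mu=1$ for a parabolic map. Hence $C_{\tau}$ on $\mathcal{D}_{\alpha}^2(\Pi^+)$ and $C_{\tau}$ on $\mathcal{A}_{\alpha}^2(\Pi^+)$ are similar, so
$$ \sigma\big(C_{\tau};\mathcal{D}_{\alpha}^2(\Pi^+)\big)=\sigma\big(C_{\tau};\mathcal{A}_{\alpha}^2(\Pi^+)\big) \quad\text{and}\quad \sigma_{ess}\big(C_{\tau};\mathcal{D}_{\alpha}^2(\Pi^+)\big)=\sigma_{ess}\big(C_{\tau};\mathcal{A}_{\alpha}^2(\Pi^+)\big). $$
Theorem~\ref{sppara} then gives exactly the two stated descriptions: the full unit circle $\mathbb{T}$ when $w_0\in\mathbb{R}$, and $\{e^{iw_0t}:t\in[0,\infty)\}\cup\{0\}$ when $w_0\in\Pi^+$, together with the coincidence of the essential spectrum and the spectrum in both cases.

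I do not expect a genuine obstacle here: the theorem is described in the excerpt as an obvious consequence of Lemma~\ref{lemmadirsim} and the earlier spectral results, and the only subtlety is the book-keeping of the scalar $\mu$, which is harmless because $\mu=1$ in the parabolic setting. The one point deserving a word of care is that Theorem~\ref{sppara} is stated for all $\alpha\geq-1$, whereas the Dirichlet spaces $\mathcal{D}_{\alpha}^2(\Pi^+)$ are only defined for $\alpha>-1$; thus I would simply restrict attention to $\alpha>-1$ throughout, which is precisely the range in which Lemma~\ref{lemmadirsim} and the differentiation isomorphism are available. Since similarity transports spectrum, essential spectrum, and the property that they coincide, no further estimates are needed and the proof is essentially a one-line application of the preceding machinery.
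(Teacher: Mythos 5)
Your proposal is correct and matches the paper's own proof exactly: both invoke Lemma~\ref{lemmadirsim} (with $\mu=1$ in the parabolic case, so the similarity is with $C_{\tau}$ itself on $\mathcal{A}_{\alpha}^2(\Pi^+)$) and then transfer the spectrum and essential spectrum from Theorem~\ref{sppara}. Your added remark about restricting to $\alpha>-1$ is a sensible, harmless clarification that the paper leaves implicit.
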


\begin{proof}
By Lemma \ref{lemmadirsim}, the operators $C_{\tau} : \mathcal{D}_{\alpha}^2  (\Pi^+)  \longrightarrow \mathcal{D}_{\alpha}^2 (\Pi^+)  $ and $ C_{\tau}: \mathcal{A}_{\alpha}^2 ( \Pi^+)  \longrightarrow \mathcal{A}_{\alpha}^2 ( \Pi^+) $ are similar and hence the result follows from Theorem \ref{sppara}.
\end{proof}

\begin{thm}\label{dirhyper}
Let $\tau$ be a hyperbolic self-map of $\Pi^+ $, that is, $\tau (w)=\mu w + w_0$, where $\mu\in (0,1)\cup (1,\infty)$ and $\textnormal{Im}\, w_0 \geq 0$. Then, for all $\alpha >-1$
\vspace{5pt}

\begin{itemize}
\item[i)] $\sigma \big( C_{\tau};  \mathcal{D}_{\alpha}^2 (\Pi^+)  \big) = \big\{ \lambda \in \mathbb{C} : \vert \lambda \vert = \mu^{ -((\alpha -2) +2 )/2}\big\} =\big\{ \lambda \in \mathbb{C} : \vert \lambda \vert = \mu^{ -\alpha /2}\big\}$, when $w_0\in \mathbb{R}$.

\vspace{5pt}

\item[ii)] $\sigma \big( C_{\tau};  \mathcal{D}_{\alpha}^2  (\Pi^+) \big) = \big\{ \lambda \in \mathbb{C} : \vert \lambda \vert\leq \mu^{ -((\alpha -2) +2 )/2}\big\} =\big\{ \lambda \in \mathbb{C} : \vert \lambda \vert \leq \mu^{ -\alpha /2}\big\} $, when $w_0\in \Pi^+$.
\end{itemize}
Moreover, the essential spectrum coincides with the spectrum in both cases.
\end{thm}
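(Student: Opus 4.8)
The plan is to reduce everything to the already-established Hardy and weighted Bergman space results via Lemma \ref{lemmadirsim}. The key structural fact is that $C_\tau$ acting on $\mathcal{D}_\alpha^2(\Pi^+)$ is similar to $\mu C_\tau$ acting on $\mathcal{A}_\alpha^2(\Pi^+)$, where the differentiation operator $D : F \longmapsto F'$ provides the intertwining isomorphism. Since similar operators share the same spectrum and the same essential spectrum, it suffices to compute $\sigma\big(\mu C_\tau ; \mathcal{A}_\alpha^2(\Pi^+)\big)$ and the corresponding essential spectrum, and then invoke Theorem \ref{dirhyper}'s hypothesis that $\tau(w)=\mu w + w_0$ is hyperbolic, so $\mu \in (0,1)\cup(1,\infty)$.

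First I would record the elementary scaling property of spectra under multiplication by a scalar: for any bounded operator $T$ and any $\mu \neq 0$, we have $\sigma(\mu T) = \mu\,\sigma(T)$ and likewise $\sigma_{ess}(\mu T) = \mu\,\sigma_{ess}(T)$, since $\mu T - \lambda = \mu(T - \lambda/\mu)$ is invertible (respectively Fredholm) exactly when $T - \lambda/\mu$ is. Applying this with $T = C_\tau$ on $\mathcal{A}_\alpha^2(\Pi^+)$, the spectrum of $\mu C_\tau$ is obtained by scaling the spectrum from Theorem B (i.e. Theorems \ref{sphypauto}, \ref{hyptauyks}, \ref{hyptaukaks} and Corollary \ref{sphypautoinverse}) by the factor $\mu$. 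The spectral sets there are either the circle $\{|\lambda| = \mu^{-(\alpha+2)/2}\}$ (automorphism case, $w_0 \in \mathbb{R}$) or the closed disc $\{|\lambda| \leq \mu^{-(\alpha+2)/2}\}$ (non-automorphism case, $w_0 \in \Pi^+$), and both are invariant in shape under multiplication by the positive real scalar $\mu$, merely changing radius.

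The core computation is then purely arithmetic with the exponents. Scaling the radius $\mu^{-(\alpha+2)/2}$ by $\mu = \mu^1 = \mu^{2/2}$ gives
\[
\mu \cdot \mu^{-(\alpha+2)/2} = \mu^{\,1 - (\alpha+2)/2} = \mu^{\,(2 - \alpha - 2)/2} = \mu^{-\alpha/2}.
\]
This matches the stated radius $\mu^{-((\alpha-2)+2)/2} = \mu^{-\alpha/2}$ and confirms the formal convention $\mathcal{D}_\alpha^2(\Pi^+) = \mathcal{A}_{\alpha-2}^2(\Pi^+)$ highlighted before the lemma. Thus in case i) the circle of radius $\mu^{-(\alpha+2)/2}$ scales to the circle of radius $\mu^{-\alpha/2}$, and in case ii) the closed disc scales accordingly. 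Since the essential spectrum coincides with the spectrum on $\mathcal{A}_\alpha^2(\Pi^+)$ in all these cases and scaling preserves this coincidence, the same holds on $\mathcal{D}_\alpha^2(\Pi^+)$.

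I anticipate no genuine obstacle here: the theorem is flagged in the text as an ``obvious consequence'' of Lemma \ref{lemmadirsim} and the earlier results, so the work is entirely in assembling the reductions cleanly. The only points demanding minor care are first reducing the general $w_0$ to the normalized forms ($\tau(w) = \mu w$, or $\tau_2(w) = \mu w + i(1-\mu)$, etc.) — but this is subsumed by invoking the already-proved half-plane theorems, which themselves handled the reduction via the automorphism conjugations $C_{g}$; and second, keeping the exponent bookkeeping consistent so that the index shift $\alpha \mapsto \alpha - 2$ lands on the correct radius. Both are routine once the similarity $C_\tau \sim \mu C_\tau$ and the scaling law $\sigma(\mu T) = \mu\,\sigma(T)$ are in place.
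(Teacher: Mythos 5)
Your proposal is correct and takes essentially the same route as the paper: both reduce the statement via Lemma \ref{lemmadirsim} to the similarity of $C_{\tau}$ on $\mathcal{D}_{\alpha}^2(\Pi^+)$ with $\mu C_{\tau}$ on $\mathcal{A}_{\alpha}^2(\Pi^+)$, and then rescale the spectra and essential spectra from Theorem B by the factor $\mu$, obtaining the radius $\mu\cdot\mu^{-(\alpha+2)/2}=\mu^{-\alpha/2}$. You merely make explicit the scaling law $\sigma(\mu T)=\mu\,\sigma(T)$ (and its Fredholm analogue) that the paper leaves implicit.
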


\begin{proof}
Since the operators $C_{\tau} : \mathcal{D}_{\alpha}^2  (\Pi^+)  \longrightarrow \mathcal{D}_{\alpha}^2  (\Pi^+) $ and $ \mu C_{\tau}: \mathcal{A}_{\alpha}^2 ( \Pi^+)  \longrightarrow \mathcal{A}_{\alpha}^2 ( \Pi^+) $ are similar by Lemma \ref{lemmadirsim}, we have that 
$$\sigma \big( C_{\tau};  \mathcal{D}_{\alpha}^2  (\Pi^+) \big) = \sigma \big( \mu C_{\tau};  \mathcal{A}_{\alpha}^2 ( \Pi^+) \big) =\big\{\lambda \in \mathbb{C} : \mu^{-1}\lambda \in \sigma \big( C_{\tau};  \mathcal{A}_{\alpha}^2 ( \Pi^+) \big)\big\}. $$
The result follows from Theorem B in Section \ref{intro} (for the proof, see Theorems \ref{sphypauto}, \ref{hyptauyks}, \ref{hyptaukaks} and Corollary \ref{sphypautoinverse} in Section \ref{hypsec}).
\end{proof}

\section*{Acknowledgements}
This article is part of the author's PhD Thesis and she would like to thank her advisors Hans-Olav Tylli and Pekka Nieminen (University of Helsinki) for helpful discussions and comments. The author is also grateful to Professor Eva Gallardo-Guti\'{e}rrez (Universidad Complutense de Madrid) for inspiring conversations.

\bibliographystyle{plain}

\end{document}